\begin{document}

\newtheorem{thm}{Theorem}[section]
\newtheorem{cor}{Corollary}[section]
\newtheorem{lem}{Lemma}[section]
\newtheorem{prop}{Proposition}[section]
\newtheorem{defn}{Definition}[section]
\newtheorem{rk}{Remark}[section]
\newtheorem{nota}{Notation}[section]
\newtheorem{Ex}{Example}[section]
\def\nm{\noalign{\medskip}}

\numberwithin{equation}{section}

\newcommand{\ds}{\displaystyle}
\newcommand{\pf}{\medskip \noindent {\sl Proof}. ~ }
\newcommand{\p}{\partial}
\renewcommand{\a}{\alpha}
\newcommand{\z}{\zeta}
\newcommand{\pd}[2]{\frac {\p #1}{\p #2}}
\newcommand{\norm}[1]{\left\| #1 \right \|}
\newcommand{\dbar}{\overline \p}
\newcommand{\eqnref}[1]{(\ref {#1})}
\newcommand{\na}{\nabla}
\newcommand{\Om}{\Omega}
\newcommand{\ep}{\epsilon}
\newcommand{\tmu}{\widetilde \epsilon}
\newcommand{\vep}{\varepsilon}
\newcommand{\tlambda}{\widetilde \lambda}
\newcommand{\tnu}{\widetilde \nu}
\newcommand{\vp}{\varphi}
\newcommand{\RR}{\mathbb{R}}
\newcommand{\CC}{\mathbb{C}}
\newcommand{\NN}{\mathbb{N}}
\renewcommand{\div}{\mbox{div}~}
\newcommand{\bu}{{\bf u}}
\newcommand{\la}{\langle}
\newcommand{\ra}{\rangle}
\newcommand{\Scal}{\mathcal{S}}
\newcommand{\Lcal}{\mathcal{L}}
\newcommand{\Kcal}{\mathcal{K}}
\newcommand{\Dcal}{\mathcal{D}}
\newcommand{\tScal}{\widetilde{\mathcal{S}}}
\newcommand{\tKcal}{\widetilde{\mathcal{K}}}
\newcommand{\Pcal}{\mathcal{P}}
\newcommand{\Qcal}{\mathcal{Q}}
\newcommand{\id}{\mbox{Id}}
\newcommand{\stint}{\int_{-T}^T{\int_0^1}}

\newcommand{\be}{\begin{equation}}
\newcommand{\ee}{\end{equation}}

\newcommand{\rd}{{\mathbb R^d}}
\newcommand{\rr}{{\mathbb R}}
\newcommand{\alert}[1]{\fbox{#1}}
\newcommand{\eqd}{\sim}
\def\R{{\mathbb R}}
\def\N{{\mathbb N}}
\def\Q{{\mathbb Q}}
\def\C{{\mathbb C}}
\def\ZZ{{\mathbb Z}}
\def\l{{\langle}}
\def\r{\rangle}
\def\t{\tau}
\def\k{\kappa}
\def\a{\alpha}
\def\la{\lambda}
\def\De{\Delta}
\def\de{\delta}
\def\ga{\gamma}
\def\Ga{\Gamma}
\def\ep{\varepsilon}
\def\eps{\varepsilon}
\def\si{\sigma}
\def\Re {{\rm Re}\,}
\def\Im {{\rm Im}\,}
\def\E{{\mathbb E}}
\def\P{{\mathbb P}}
\def\Z{{\mathbb Z}}
\def\D{{\mathbb D}}
\def\p{\partial}
\newcommand{\ceil}[1]{\lceil{#1}\rceil}

\title{Pattern formation in a cell migration model with aggregation and diffusion}

\author{Lianzhang Bao\thanks{Department of Mathematics and Statistics,
Auburn University,  AL 36849, U. S. A. (lzb0059@auburn.edu).}}

\date{}

\maketitle

\begin{abstract}
In this paper, we study pattern formations in an aggregation and diffusion cell migration model with Dirichlet boundary condition. The formal continuum limit of the model is a nonlinear parabolic equation with a diffusivity which can become negative if the cell density is small and spatial oscillations and aggregation occur in the numerical simulations. In the classical diffusion migration model with positive diffusivity and non-birth term, species will vanish eventually with Dirichlet boundary. However, because of the aggregation mechanism under small cell density, the total species density is conservative in the discrete aggregation diffusion model. Also, the discrete system converges to a unique positive steady-state with the initial density lying in the diffusion domain. Furthermore, the aggregation mechanism in the model induces rich asymptotic dynamical behaviors or patterns even with 5 discrete space points which gives a theoretical explanation that the interaction between aggregation and diffusion induces patterns in biology. In the corresponding continuous backward forward parabolic equation, the existence of the solution, maximum principle, the asymptotic behavior of the solution is also investigated.
\end{abstract}



\pagestyle{myheadings}
\thispagestyle{plain}
{}

\section{Introduction}
Many models have been proposed for spatial pattern formation in cell evolutions and analyzed for the standard case of zero-flux boundary conditions (see \cite{Baker2019afree}, \cite{Bao2020continuum}, \cite{Bubba2020from}, \cite{Issa2017dynamics}, \cite{Issa2019persistence}, \cite{Myerscough1998pattern}, \cite{Padron1998sobolev}, \cite{Turner2004from} and the references therein). However, relatively little attention has been paid to the role of of boundary conditions on the form of the final pattern.
The current paper is  to study the asymptotic dynamical behaviors the following aggregation diffusion lattice model with Dirichlet boundary condition:
\begin{equation}\label{lattice-eq}
 u_j^{t+\tau}=u_j^t+\frac{u_j^tu_{j-1}^t}{2}(u_j^t+u_{j-1}^t-1)(u_{j-1}^t-u_j^t)+\frac{u_j^tu_{j+1}^t}{2}(u_j^t+u_{j+1}^t-1)(u_{j+1}^t-u_j^t),
\end{equation}
where  $u^t_j = u(x_j,t),u_0^t=u_N^t =0,$ and $u^{t+\tau}_{j}=u(x_j,t+\tau)$ where $x_j=\frac{j}{N} (j=0,\cdots, N)$ and $\tau$ is the time period. Equation \eqref{lattice-eq} can be viewed as a discrete version of the following backward forward parabolic equation with Dirichlet boundary:
\begin{equation}\label{continu-eq}
 u_t= [ D(u)u_x]_x \quad (x,t)\in Q_T,
\end{equation}
where $D(u) = u^2(u-\frac{1}{2})$ and $Q_T:=[0,1]\times[0,T]$. Equation \eqref{continu-eq} is a special case of backward-forward parabolic equation and the background is from the pattern formation or attraction and repulsion phenomena in biology. Aggregative behaviour is an important factor influencing survival and reproduction of
animals. For instance, both theories and experiments suggest that gregarious
behaviour can increase an animal's chances of avoiding capture by a predator \cite{Turchin1989population}.
There is by now a vast literature devoted to modeling the migration of cell populations (see \cite{KA1}, \cite{KA2}, \cite{KC}, \cite{AG} ,\cite{Baker2019afree}, \cite{BaoShen1}, \cite{BaoShen2}, \cite{Bubba2020from}, \cite{Deroulers2009modeling}, \cite{HPO}, \cite{Linana1999spatially},\cite{M}, \cite{Skellam1952random} and the references therein).

Models of cell migration generally come in two complementary forms: stochastic, individual-based or deterministic, population-based. Simulations of stochastic models often require more computational work, but do include the randomness that is often prevalent in biological system. Population-based models, which usually involve systems of partial differential equations (PDEs), generally not only require less computational work to obtain a numerical solution, but one can also use many tools from PDE analysis to explore their behaviors \cite{Thompson2012modelling}.

There are several types of individual-based model. One of the most commonly used is the space-jump model, where each cell moves around in space on a lattice, jumping from its current compartment to a neighbouring one (see \cite{Bao2020continuum}, \cite{Bao2014traveling}, \cite{Bubba2020from}, \cite{Skellam1952random}, \cite{Turchin1989population} and the references therein). Other models include velocity-jump models, where each cell repeatedly jumps between different velocities. Individual-based models have found applications in ecology, pattern formation, wound healing, tumor growth and gastrulation and vasculogenesis in the early embryo, amongst many others.

Population-based models frequently involve the development of a reaction-diffusion equation (see \cite{Bao2020continuum},\cite{HPO}, \cite{Keller1970Initiation},\cite{Keller1971Model}, \cite{Turchin1989population}, \cite{Turner2004from} and the references therein). These are useful when the length scale we wish to investigate is much greater than the diameter of the individual elements composing it. These models have been found to be particularly useful in the study of pattern formation in nature, especially the phenomenon of ``diffusion-driven instability" (see \cite{KA1}, \cite{KC}, \cite{AG}, \cite{Bao2020continuum}, \cite{Bao2014traveling},\cite{HPO}, and the references therein).
Keller-Segel model is commonly used population-based models for cell migration and aggregation due to chemotaxis \cite{Keller1970Initiation}, \cite{Keller1971Model}. Various biological phenomena, including the movement of  {\it Escherichia coli} \cite{Keller1971Model} and {\it Dictyostelium discoideum} \cite{Keller1970Initiation}, have been represented using the Keller-Segel model. A simplified version of the model involves the distribution $u$ of the density of the slime mold \textit{Dyctyostelum discoideum} and the concentration $v$ of a certain chemoattractant satisfying the following system of partial differential equations
\begin{equation}\label{KS}
\begin{cases}
 u_t = \nabla\cdot(\nabla u - \chi (u,v)\nabla v), \quad x\in\Omega
 \\
 \epsilon v_t =d\Delta v + F(u,v),\quad x\in\Omega
\end{cases}
\end{equation}
complemented with certain boundary condition on $\partial \Omega$ if $\Omega$ is bounded, where $\Omega \subset \mathbb{R}^N$ is an open domain, $\epsilon \geq 0$ is a non-negative constant linked to the speed of diffusion of the chemical, $\chi$ represents the sensitivity with respect to chemotaxis, $d$ is the diffusion coefficient of chemoattractant $v$, and the function $F$ models the growth of the chemoattractant.

In the special case when chemoattractant $v$ diffuses faster than slime mold $u$ and chemoattractant $v$ will reach the steady state and $\epsilon =0$. Furthermore, when let $d=0, F(u,v)=u-v, \chi(u,v) = 1- u^2(u-\frac{1}{2})$, Equation \eqref{continu-eq} is derived from Keller-Segel model \eqref{KS}. From this point of view, backward-forward parabolic equation \eqref{continu-eq} is a special Keller-Segel model.

When both individual-based and population-based models are studied simultaneously for the same cell's behaviors, similar results can be expected for both models at the length scales where their ranges of applicability overlap. However, if there are significant differences in the both models describing the same cell migration behaviors, there may exist important relations between the individual and population levels and which are neglected in the analysis.

Because the backward-forward parabolic equation \eqref{continu-eq} is ill-posed (see \cite{Bao2020continuum}, \cite{Helmers2013interface}, \cite{Helmers2018hysteresis}, \cite{Smarrazzo2012degenerate}), the objective of the current paper is mainly to investigate the asymptotic dynamical behaviors of Equation \eqref{lattice-eq} with Dirichlet boundary condition. More specifically, the bounded-ness and asymptotic behaviors of the lattice model solution in the diffusion or forward regions will be explored. For the general initial solution, special 5 points lattice model with Dirichlet boundary condition will have many asymptotic dynamical behaviors which gives a theoretical explanation of pattern formation in biology. The biological incentive to study these questions lies in that the species tends to aggregate when they are under surviving risk and their habitat (aggregation/backward region) will shrink as time continues and the total density is conserved which is the key difference compared to the pure diffusion or forward parabolic equation where its domain only has the spreading property and the total density will extinct eventually with Dirichlet boundary condition.

\par The rest of the paper is organized as follows. In section 2, we derive the aggregation diffusion lattice cell migration model via a biased random walk, then combined with ``the diffusion approximation'' process, the corresponding continuous backward-forward parabolic equation is derived. In section 3, we explore the properties of the aggregation diffusion lattice model \eqref{lattice-eq} with Dirichlet boundary condition, more precisely, the bounded-ness, asymptotic behaviors of the lattice solution in the diffusion or forward region, asymptotic dynamical behaviors of solution with general initial solution in 5 discrete points case will be investigated. In section 4, we study the existence and nonexistence, maximum principle, and asymptotic behaviors of solution of Equation \eqref{continu-eq} under different specific conditions.

\section{The derivation of the aggregation diffusion equation}
By the need for survival, mating or to overcome the hostile environment, the population have the tendency of aggregation when the population density is small and diffusion otherwise. For simplicity, we consider one species living in a one-dimensional habitat without birth term. First we discretize space in a regular equally spaced manner \cite{Bao2020continuum}, \cite{Okubo1980diffusion}, \cite{Turchin1989population}. Let $h$ be the distance between two successive points of the mesh and let $u(x,t)$ be the population density that any individual of the population is at the point $x$ and time $t$. By scaling, we can assume that $0\leq u(x,t)\leq 1$. During a time period $\tau$ an individual which is at the position $x$ and time $t$ can either (see Figure \ref{Fig:Data1}):
\begin{description}
  \item[1.] move to the right of $x$ to the point $x+h$, with probability $R(x,t),$ or
  \item[2.] move to the left of $x$ to the point $x-h$, with probability $L(x,t)$ or
  \item[3.] stay at the position $x$, with probability $N(x,t).$
\end{description}
\begin{figure}[!htb]
   \begin{minipage}{0.80\textwidth}
     \centering
     \includegraphics[width=0.80\linewidth]{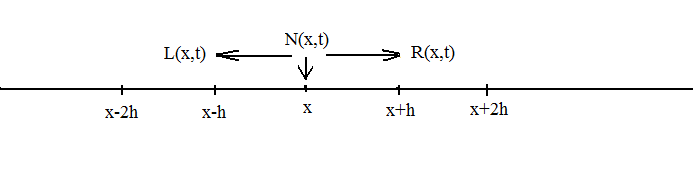}
     \caption{Movement of population}\label{Fig:Data1}
   \end{minipage}\hfill
\end{figure}
Assume that there are no other possibilities of movement we have
\begin{equation*}
 N(x,t) + R(x,t) + L(x,t) = 1.
\end{equation*}
In the simplest case \cite{Okubo1980diffusion}, we assume $N(x,t) = p$ and $R(x,t) = L(x,t) = \frac{1-p}{2}$, the classical random walk or diffusion model is derived as following:
\begin{equation}\label{RDW}
u(x,t+\tau) = N(x,t)u(x,t) + R(x-h,t)u(x-h,t) + L(x+h,t)u(x+h,t)
\end{equation}
Under the constant transferring probability assumption, we have
\begin{equation}\label{lattice-heat}
u(x,t+\tau) = pu(x,t) + \frac{1-p}{2}u(x-h,t) + \frac{1-p}{2}u(x+h,t).
\end{equation}
Expanding all terms in Taylor series, we obtain
\begin{eqnarray*}
&&u(x,t) + \tau \frac{\partial u}{\partial t} + \frac{\tau^2}{2} \frac{\partial^2 u}{\partial t^2}
\\
&&= pu(x,t) + \frac{1-p}{2}[u(x,t) - h\frac{\partial u}{\partial x}(x,t) + \frac{h^2}{2}\frac{\partial^2 u}{\partial x^2}(x,t) - \frac{h^3}{6}\frac{\partial^3 u}{\partial x^3}(x,t) +O(h^4)]
\\
&&\quad + \frac{1-p}{2}[u(x,t) +h\frac{\partial u}{\partial x}(x,t) + \frac{h^2}{2}\frac{\partial^2 u}{\partial x^2}(x,t) + \frac{h^3}{6} \frac{\partial^3 u}{\partial x^3}(x,t) +O(h^4)].
\end{eqnarray*}
With the diffusion approximation assumption ($h^2/\tau \rightarrow C$, when $h,\tau\rightarrow 0$), we derive the classical heat equation in one dimension space
\begin{equation*}
 \frac{\partial u}{\partial t} = \frac{C}{2} \frac{\partial^2 u}{\partial x^2}.
\end{equation*}
However, the transferring probability may depend on the neighboring density information (\cite{KA1}, \cite{Bao2014traveling}, \cite{Turchin1989population}), a more reasonable assumption is as follows:
\begin{eqnarray*}
R(x,t) &=& K(u(x+h,t)),
\\
L(x,t) &=& K(u(x-h,t)),
\end{eqnarray*}
where $ K(u(x,t))$ measures the probability of movement which depends on the population density.


\par Using the notations above again, the density master equation can be written as follows:
\begin{equation}\label{Mast-eq}
u(x,t+\tau) = N(x,t)u(x,t) + R(x-h,t)u(x-h,t) + L(x+h,t)u(x+h,t).
\end{equation}
By using Taylor series, we obtain the following approximation
\begin{eqnarray*}
u(x,t)+\tau\frac{ \partial u}{dt} &=&N(x,t)u(x,t)+[R(x,t)u(x,t)-h\frac{\partial(R u)}{\partial x}+\frac{h^2}{2}\frac{\partial^2(R u)}{\partial x^2}]
\\
&+&[L(x,t)u(x,t) +h\frac{\partial(Lu)}{\partial x}+\frac{h^2}{2}\frac{\partial^2(Lu)}{\partial x^2}],
\end{eqnarray*}
then we get
\begin{equation*}
\tau\frac{\partial u}{\partial t} =[-h\frac{\partial(Ru)}{\partial x}+\frac{h^2}{2}\frac{\partial^2(Ru)}{\partial x^2}]+ [h\frac{\partial(Lu)}{\partial x}+\frac{h^2}{2}\frac{\partial^2(Lu)}{\partial x^2}].
\end{equation*}
By Setting
\begin{equation*}
\beta(x,t) = R(x,t)-L(x,t) = K(u(x+h,t))-K(u(x-h,t))= 2h\frac{\partial}{\partial x}[K(u(x,t)]+O(h^3)
\end{equation*}
and
\begin{equation*}
 \nu(x,t) =R(x,t)+L(x,t)= K(u(x+h,t))+K(u(x-h,t))= 2K(u(x,t))+O(h^2),
\end{equation*}
we have
\begin{equation*}
 \tau\frac{d u}{d t}= -h\frac{\partial[(R-L)u]}{\partial x}+ \frac{h^2}{2}\frac{\partial^2[(R+L)u]}{\partial x^2}.
\end{equation*}
Now substituting  $\beta$ and $\nu$ in the above equation, we can obtain
\begin{equation*}
 \tau\frac{\partial u}{\partial t}= -2h^2\frac{\partial\{\frac{\partial}{\partial x}[K(u(x,t)]u\}}{\partial x}+ \frac{h^2}{2}\frac{\partial^2[2K(u(x,t))u]}{\partial x^2}+ O(h^3),
\end{equation*}
and assume that $h^2/\tau\rightarrow C>0$ (finite) as $\tau, h\rightarrow 0$, we get the following
\begin{equation*}
 \frac{\partial u}{\partial t}= -2C\frac{\partial\{\frac{\partial}{\partial x}[K(u(x,t))]u\}}{\partial x}+ C\frac{\partial^2[K(u(x,t))u]}{\partial x^2},
\end{equation*}
\begin{equation*}
 \frac{\partial u}{\partial t}= C\frac{\partial}{\partial x}\{-2\frac{\partial}{\partial x}[K(u(x,t))]u+ \frac{\partial[K(u(x,t))u]}{\partial x}\},
\end{equation*}
\begin{equation*}
 \frac{\partial u}{\partial t}= C\frac{\partial }{\partial x}(-2K'\frac{\partial u}{\partial x}u+ K'\frac{\partial u}{\partial x}u+ K\frac{\partial u}{\partial x})= C\frac{\partial}{\partial x}[(K-uK')\frac{\partial u}{\partial x}].
\end{equation*}

\par When higher order terms are kept, which will lead to the following
\begin{equation}\label{eq:3001}
 \frac{\partial u}{\partial t} +\tau \frac{\partial^2 u}{\partial t^2}=\frac{\partial}{\partial x}[(K-uK')\frac{\partial u}{\partial x}]+\frac{h^2}{12}[\frac{\partial^4 u}{\partial x^4}K-u\frac{\partial^4}{\partial x^4}(K(u))],
\end{equation}
the coefficient for the high order term $\frac{\partial^4 u}{\partial x^4}$ is $(K-uK')$ which is the same as the diffusion coefficient for $\frac{\partial u}{\partial x}$. Comparing to the standard Cahn-Hilliad equation or Cahn-Hilliad equation with degenerate mobility coefficient, as far as we know Equation \eqref{eq:3001} is new and the properties of the solution will be investigated in the future.

\par By assuming that $K(u(x,t)) = 1/2(u^2-u^3)$ which means the transfer probability is small when density is small or large. Biologically, there is no migration when species detecting zero (small density indicates the species is under risk) or 1 (high density indicates stong competition) neighboring density. We can see that $0\leq K(u(x,t))\leq 1$ which satisfies the assumption of probability and that $K-uK'= u^2(u-1/2)$, which means aggregation when $0\leq u<1/2$. By plugging this probability in the equation \eqref{Mast-eq} and denoting the discrete density $u(x_j,t)=u^t_j,u(x_j-h,t)=u^t_{j-1},u(x_j+h,t)=u^t_{j+1},\dots$, we obtain the discrete iteration model \eqref{lattice-eq}:
\begin{equation}\label{eq:002}
 u_j^{t+\tau}=u_j^t+\frac{u_j^tu_{j-1}^t}{2}(u_j^t+u_{j-1}^t-1)(u_{j-1}^t-u_j^t)+\frac{u_j^tu_{j+1}^t}{2}(u_j^t+u_{j+1}^t-1)(u_{j+1}^t-u_j^t),
\end{equation}
which is a special finite difference scheme of the following backward-forward parabolic equation:
\begin{equation}\label{Master-eq}
 u_t= [ D(u)u_x]_x \quad (x,t)\in Q_T,
\end{equation}
where $Q_T:=[0,1]\times[0,T]$, and $D(u)\in C^\infty[0,1]$ with
\begin{equation}\label{Diffusion-cond}
 D(u)< 0 \quad \mbox{in} \quad(0,\alpha), \quad D(u)>0 \quad\mbox{in}\quad (\alpha,1).
\end{equation}

\section{Pattern formations in the lattice model}
In this section, we will investigate the dynamic behaviors of system \eqref{lattice-eq} with Dirichlet boundary condition, which will have strike differences to classical discrete heat equation \eqref{lattice-heat} with Dirichlet boundary. First, we introduce dynamical behaviors of classical discrete heat equation without birth term, then follows the dynamical results of the aggregation diffusion discrete equation \eqref{lattice-eq}.

Consider Equation \eqref{lattice-heat} with Dirichlet boundary condition, we have
\begin{equation*}
u(j,t+\tau) = pu(j,t) + \frac{1-p}{2}u(j-1,t) + \frac{1-p}{2}u(j+1,t),\quad j=1,2,\cdots,N-1,
\end{equation*}
where $u(j,t)= u(x_j,t), u(0,t) =u(N,t)=0$. Then we have
\begin{eqnarray*}
\sum_{j=1}^{N-1}u(j,t+\tau) &=&\sum_{j=1}^{N-1}[pu(j,t) + \frac{1-p}{2}u(j-1,t) + \frac{1-p}{2}u(j+1,t)]
\\
&=& p\sum_{j=1}^{N-1}u(j,t) + \frac{1-p}{2}\sum_{j=1}^{N-2}u(j,t) + \frac{1-p}{2}\sum_{j=2}^{N-1}u(j,t)
\\
&=& \sum_{j=1}^{N-1}u(j,t) -\frac{1-p}{2}u(1,t) - \frac{1-p}{2}u(N-1,t)
\\
&\leq& \sum_{j=1}^{N-1}u(j,t),
\end{eqnarray*}
which means the total density is a decreasing function in time, and we have limit
\begin{eqnarray*}
\lim_{t\to\infty}\sum_{j=1}^{N-1}u(j,t) = C &=& \lim_{t\to\infty}\sum_{j=1}^{N-1}u(j,t) - \frac{1-p}{2}\lim_{t\to\infty}[u(1,t) +u(N-1,t)]
\\
C&=& C - \frac{1-p}{2}\lim_{t\to\infty}[u(1,t) +u(N-1,t)]
\end{eqnarray*}
Because the positivity of $u(j,t) (0\leq j\leq N)$, the only possibility is
\begin{equation*}
 \lim_{t\to\infty}u(1,t) = \lim_{t\to\infty}u(N-1,t)=0.
\end{equation*}
Then from Equation \eqref{lattice-heat}, we have
\begin{eqnarray*}
\lim_{t\to\infty}u(1,t+\tau) &=& p\lim_{t\to\infty}u(1,t) + \frac{1-p}{2}\lim_{t\to\infty}u(0,t) + \frac{1-p}{2}\lim_{t\to\infty}u(2,t)
\\
0&=& 0 + 0 + \frac{1-p}{2}\lim_{t\to\infty}u(2,t)
\end{eqnarray*}
and
\begin{equation*}
 \lim_{t\to\infty}u(2,t)=0.
\end{equation*}
By the iteration method, we obtain
\begin{equation*}
 \lim_{t\to\infty}u(i,t)= \lim_{t\to\infty}u(x_i,t) =0,\quad i=0,1,\cdots,N,
\end{equation*}
which can be views as the discrete version of Theorem \ref{Main-thm2} in section 4. Numerical simulation see Figure \ref{Fig:Data2} where the transferring probability $p=1/2$, the time period $\tau=0.1$, and $h=1/200$.
\begin{figure}[!htb]
   \begin{minipage}{1.00\textwidth}
     \centering
     \includegraphics[width=1.00\linewidth]{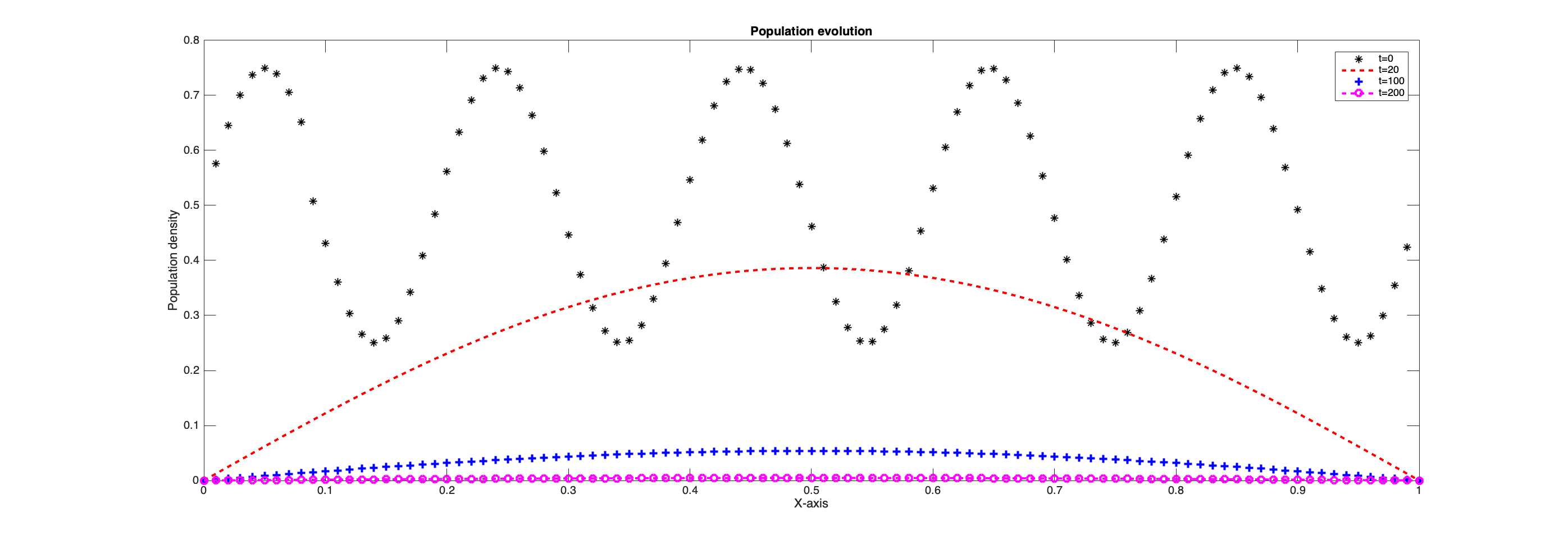}
     \caption{Initial density $u(x,0)=\frac{1}{4}\sin(10\pi)+\frac{1}{2}$}\label{Fig:Data2}
   \end{minipage}\hfill
\end{figure}

In the following, we will mainly focus on the boundedness, conservation, asymptotic behaviors of Equation \eqref{lattice-eq} with Dirichlet boundary condition.

In the derivation of equation \eqref{lattice-eq} and \eqref{continu-eq}, we assume that population density has the range in the interval $[0,1]$ and it can be proved rigorously in the following statement with the initial density in $[0,1]$.
\begin{thm}[Bounded-ness]\label{Main-thm5}
 Suppose the initial solution $0\leq u(j,0)\leq 1, j=0,\dots,N$, with Dirichlet boundary condition, the solution of the equation \eqref{lattice-eq} is bounded in $[0,1]$.
\end{thm}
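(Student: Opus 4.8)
The plan is to argue by induction on the number of time steps. The base case is the hypothesis $0\le u_j^0\le 1$, and for the inductive step it suffices to prove the pointwise algebraic claim: if $0\le u_\ell^t\le 1$ for all $\ell$, then $0\le u_j^{t+\tau}\le 1$ for every interior index $j$ (the boundary values being $0$ by the Dirichlet condition). Writing $a=u_{j-1}^t$, $b=u_j^t$, $c=u_{j+1}^t$, I would first factor $u_j^t$ out of \eqref{lattice-eq} to obtain the convenient form
\[
u_j^{t+\tau}=b\,\bigl[\,1+\phi(a)+\phi(c)\,\bigr],\qquad \phi(s):=\tfrac{s}{2}(s+b-1)(s-b),
\]
where $b\in[0,1]$ is regarded as a fixed parameter. (When $j=1$ or $j=N-1$ one of $a,c$ equals $0$, and since $\phi(0)=0$ nothing special is needed at the boundary.) Everything then reduces to estimating $\phi$ on $[0,1]$, uniformly in $b\in[0,1]$.

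For the lower bound I would note that for $s,b\in[0,1]$ each of the three factors $s$, $s+b-1$, $s-b$ lies in $[-1,1]$, so $|\phi(s)|\le\tfrac12$; hence $1+\phi(a)+\phi(c)\ge 0$, and since $b\ge 0$ this gives $u_j^{t+\tau}\ge 0$.

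The upper bound is the one place where care is required, and I expect it to be the main obstacle. The crude estimate $\phi(s)\le\tfrac18$ (from $b(1-b)\le\tfrac14$) only yields $u_j^{t+\tau}\le\tfrac54 b$, which is useless near $b=1$; one must exploit that $\max_{[0,1]}\phi$ \emph{depends on $b$} and degenerates precisely as $b\to 1$. Concretely I would expand $\phi(s)=\tfrac12\bigl(s^3-s^2+b(1-b)s\bigr)$ and deduce
\[
\phi(1)-\phi(s)=\tfrac12(1-s)\bigl(s^2+b(1-b)\bigr)\ \ge\ 0 \qquad (s\in[0,1]),
\]
so that $\displaystyle\max_{s\in[0,1]}\phi(s)=\phi(1)=\tfrac12 b(1-b)$. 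Substituting this back,
\[
u_j^{t+\tau}=b\,\bigl[1+\phi(a)+\phi(c)\bigr]\le b\,\bigl[1+b(1-b)\bigr]=b+b^2-b^3=1-(1+b)(1-b)^2\le 1,
\]
since $(1+b)(1-b)^2\ge 0$ on $[0,1]$. This closes the induction.

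In summary, the reduction to a pointwise claim on $[0,1]^3$, the lower bound, and the handling of the Dirichlet indices are all routine; the substantive point is the sharp bound $\phi\le\phi(1)=\tfrac12 b(1-b)$ together with the resulting identity $b+b^2-b^3=1-(1+b)(1-b)^2$, which is exactly what makes the upper bound land on $1$ rather than on $\tfrac54$. A minor observation is that boundedness does not really use the Dirichlet values beyond the fact that $0\in[0,1]$, so the same argument applies to any boundary data in $[0,1]$.
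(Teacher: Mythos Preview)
Your proof is correct and follows essentially the same strategy as the paper: the paper splits $u_j^t=\tfrac{u_j^t}{2}+\tfrac{u_j^t}{2}$ and introduces the auxiliary function $f(x,y)=x+xy(x+y-1)(y-x)$, reducing the claim to $0\le f\le 1$ on $[0,1]^2$ (with details deferred to \cite{Bao2020continuum}); your factorization $u_j^{t+\tau}=b[1+\phi(a)+\phi(c)]$ is the same reduction written in a different guise, since $f(b,s)=b(1+2\phi(s))$. Your explicit computations---the identity $\phi(1)-\phi(s)=\tfrac12(1-s)(s^2+b(1-b))$ and the factorization $b+b^2-b^3=1-(1+b)(1-b)^2$---supply precisely the details the paper omits.
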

\begin{proof}
Rewrite Equation \eqref{lattice-eq} as
\begin{equation}\label{lattice-eq-2}
 u_j^{t+\tau}=\frac{u_j^t}{2}+\frac{u_j^tu_{j-1}^t}{2}(u_j^t+u_{j-1}^t-1)(u_{j-1}^t-u_j^t)+ \frac{u_j^t}{2}+\frac{u_j^tu_{j+1}^t}{2}(u_j^t+u_{j+1}^t-1)(u_{j+1}^t-u_j^t),
\end{equation}
and then consider the auxiliary function
\begin{equation}\label{Aux-eq}
f(x,y) = x + xy(x+y-1)(y-x), \quad \forall x,y\in [0,1].
\end{equation}
Then by proving the maximum and minimum of $f(x,y)$ is in the domain $[0,1]$ and we can prove the statement.
The detailed arguments are similar to the Neumann boundary or non-flux boundary condition case in \cite[Theorem 3.1]{Bao2020continuum}, we omit the details here.

\begin{thm}[Conservation]\label{Main-thm6}
Given the initial solution $0\leq u(j,0)\leq 1, j=0,\dots,N$, the total density of system \eqref{lattice-eq} is conservative with Dirichlet boundary condition:
\begin{equation}\label{Conserv}
 \sum_{j=1}^{N-1} u(j,t+\tau) = \sum_{j=1}^{N-1} u(j,t)
\end{equation}
\end{thm}
\end{proof}
\begin{proof} For the Dirichlet boundary condition, we have
\begin{equation*}
 u(0,t)=0,\quad u(N,t)=0.
\end{equation*}
 From Equation \eqref{lattice-eq}, we have
\begin{equation}\label{lattice-eq-3}
 u_j^{t+\tau}=u_j^t+C_j^t(u_{j-1}^t-u_j^t)+C_{j+1}^t(u_{j+1}^t-u_j^t),
\end{equation}
where
\begin{equation}\label{C-coef}
 C_j^t = \frac{u_j^tu_{j-1}^t}{2}(u_j^t+u_{j-1}^t-1).
\end{equation}
Because $C_1^t=C_N^t=0$, then we get
\begin{eqnarray*}
 \sum_{j=1}^{N-1} u_j^{t+\tau}&=&\sum_{j=1}^{N-1} u_j^{t} + C_1^t(u_0^t-u_1^t)+C_2^t(u_2^t-u_1^t)
 \\
 &&\quad +C_2^t(u_1^t-u_2^t)+C_3^t(u_3^t-u_2^t)+\dots+
 \\
 &&\quad +C_{N-1}^t(u_{N-2}^t-u_{N-1}^t)+C_N^t(u_N^t-u_{N-1}^t),
 \\
 &=& \sum_{j=1}^{N-1} u_j^{t} + C_1^t(u_0^t-u_1^t) + C_N^t(u_N^t-u_{N-1}^t) = \sum_{j=1}^{N-1}u_j^{t},
\end{eqnarray*}
and the density in the lattice model \eqref{lattice-eq} is conservative.
\end{proof}

In the following, we have the maximal principal and asymptotic results of Equation \eqref{lattice-eq} with initial solution in the forward or diffusion domain ($1/2=\alpha\leq u(j,0)\leq 1$) which is similar to Theorem \ref{Main-thm1}, \ref{thm2} for Equation \eqref{continu-eq} with Neumann boundary condition in section 4. For the general initial solution case, we will see in the special 5
 lattice points case that a rich dynamical behaviors exist.

\begin{thm}[Maximum principle]\label{thm4}
Suppose $1/2=\alpha\leq u(j,0)\leq 1,$ $ j=1,\dots,N-1$, then the solution of Equation \eqref{lattice-eq} with Dirichlet boundary satisfies
\begin{equation}
\min_{1\leq j\leq N-1}u(j,0)\leq u(j,t)\leq \max_{1\leq j\leq N-1}u(j,0) \quad \forall t\geq 0.
\end{equation}
\end{thm}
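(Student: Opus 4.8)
The plan is to rewrite the iteration in the convex-combination form already used for conservation, namely
\begin{equation*}
u_j^{t+\tau}=u_j^t+C_j^t(u_{j-1}^t-u_j^t)+C_{j+1}^t(u_{j+1}^t-u_j^t),\qquad C_j^t=\frac{u_j^tu_{j-1}^t}{2}(u_j^t+u_{j-1}^t-1),
\end{equation*}
and to check that, on the restricted range $[\alpha,1]=[1/2,1]$, each coefficient $C_j^t$ is nonnegative and the two coefficients together with the ``self'' weight $1-C_j^t-C_{j+1}^t$ form a genuine convex combination of $u_{j-1}^t$, $u_j^t$, $u_{j+1}^t$. Indeed, when $u_j^t,u_{j-1}^t\ge 1/2$ we have $u_j^t+u_{j-1}^t-1\ge 0$, so $C_j^t\ge 0$; and since $u_j^tu_{j-1}^t\le 1$ and $u_j^t+u_{j-1}^t-1\le 1$, we also get an explicit upper bound $C_j^t\le 1/2$, hence $C_j^t+C_{j+1}^t\le 1$, so $1-C_j^t-C_{j+1}^t\ge 0$. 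Therefore
\begin{equation*}
u_j^{t+\tau}=(1-C_j^t-C_{j+1}^t)\,u_j^t+C_j^t\,u_{j-1}^t+C_{j+1}^t\,u_{j+1}^t
\end{equation*}
is a convex combination of values at the previous time step.

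Granting this, the maximum principle follows by a one-step argument and induction on $t$. Set $m=\min_{1\le j\le N-1}u(j,0)$ and $M=\max_{1\le j\le N-1}u(j,0)$; note $1/2\le m\le M\le 1$. Suppose inductively that $m\le u(j,t)\le M$ for $j=1,\dots,N-1$ (the base case is the hypothesis). For an interior index $j$ with $2\le j\le N-2$ all three of $u_{j-1}^t,u_j^t,u_{j+1}^t$ lie in $[m,M]$, so the convex combination above does too. For $j=1$ we have $u_0^t=0$, but $C_1^t=\frac{u_1^tu_0^t}{2}(\cdots)=0$, so $u_1^{t+\tau}=(1-C_2^t)u_1^t+C_2^t u_2^t$ is a convex combination of $u_1^t,u_2^t\in[m,M]$; similarly $C_N^t=0$ handles $j=N-1$. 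Hence $m\le u(j,t+\tau)\le M$ for all interior $j$, which closes the induction and proves the claim. One should also invoke Theorem \ref{Main-thm5} (or re-derive the relevant inequalities) to be sure $u(j,t)$ stays in $[0,1]$, since the sign and size estimates on $C_j^t$ were computed there; in fact the argument above simultaneously re-proves boundedness on the smaller invariant interval $[1/2,1]$.

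The main obstacle, and the only place requiring care, is verifying that the region $\{u_j\ge 1/2\}$ is actually preserved under one step of the iteration — i.e. that the lower bound $u(j,t+\tau)\ge m\ge 1/2$ really holds. This is exactly what the convex-combination structure delivers, but it hinges on the nonnegativity $C_j^t\ge 0$, which is precisely where the threshold $\alpha=1/2$ enters (for $u<1/2$ the coefficient can change sign, the diffusivity $D(u)=u^2(u-1/2)$ becomes negative, and no such comparison principle can hold — consistent with the ill-posedness emphasized in the introduction). So the proof is essentially: (i) algebraic identity putting the scheme in convex-combination form; (ii) the sign/magnitude bounds $0\le C_j^t$, $C_j^t+C_{j+1}^t\le 1$ valid on $[1/2,1]$; (iii) straightforward induction using the vanishing boundary coefficients $C_1^t=C_N^t=0$.
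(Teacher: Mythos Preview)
Your argument is correct and uses the same underlying ingredients as the paper --- the rewriting $u_j^{t+\tau}=u_j^t+C_j^t(u_{j-1}^t-u_j^t)+C_{j+1}^t(u_{j+1}^t-u_j^t)$, the bounds $0\le C_j^t\le 1/2$ valid on $[1/2,1]$, the boundary observation $C_1^t=C_N^t=0$, and induction on $t$. The packaging differs: the paper does not write the scheme explicitly as a convex combination but instead argues separately at the extremal indices (showing $u(k_1,t+\tau)\le u(k_1,t)$ at a maximum and $u(k_2,t+\tau)\ge u(k_2,t)$ at a minimum) and then bounds an ``ordinary'' point via $u(i,t+\tau)-u(i,t)\le u_{k_1}^t-u_i^t$ and $\ge u_{k_2}^t-u_i^t$. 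Your convex-combination formulation is tidier and handles all interior indices uniformly without the max/min/ordinary case split; the paper's version makes the monotonic movement of the extreme values more visible but is otherwise the same argument. Your remark that invoking Theorem~\ref{Main-thm5} is unnecessary is also correct: the convex-combination step already shows invariance of the smaller interval $[m,M]\subset[1/2,1]$.
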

\begin{proof} Suppose $u(k_1,t)=\max_{1\leq j\leq N-1}u(j,t)$, $u(k_2,t)=\min_{1\leq j\leq N-1}u(j,t)$. In the special case the maximal density is close the boundary point, for example $k_1=1$, then from Equation \eqref{lattice-eq} we have
\begin{eqnarray*}\label{max-value}
u_1^{t+\tau} &=& u_1^t + \frac{u_{1}^tu_{2}^t}{2}(u_{1}^t+u_{2}^t-1)(u_{2}^t-u_{1}^t)
\\
&\leq & u_1^t.
\end{eqnarray*}
Similar result exists when the minimum density is close the boundary points, which means
\begin{equation*}
 u(k_2,t+\tau) \geq u(k_2,t).
\end{equation*}
In the general case, the maximal density point(s) is in the interior of the lattice points, then we have
\begin{eqnarray}\label{eq:015}
 u(k_1,t+\tau)-u(k_1,t)&=&\frac{u_{k_1}^tu_{k_1-1}^t}{2}(u_{k_1}^t+u_{k_1-1}^t-1)(u_{k_1-1}^t-u_{k_1}^t)\nonumber
 \\
 &+&\frac{u_{k_1}^tu_{k_1+1}^t}{2}(u_{k_1}^t+u_{k_1+1}^t-1)(u_{k_1+1}^t-u_{k_1}^t),
\end{eqnarray}
with $u_{k_1}^t+u_{k_1-1}^t-1\geq 0,u_{k_1}^t+u_{k_1+1}^t-1\geq 0$ and $u_{k_1-1}^t-u_{k_1}^t\leq0, u_{k_1+1}^t-u_{k_1}^t\leq 0$, which leads to
\begin{equation*}
 u(k_1,t+\tau)-u(k_1,t)\leq 0.
\end{equation*}
The same idea can also be used for the minimum value point and get
\begin{equation*}
 u(k_2,t+\tau)-u(k_2,t)\geq 0.
\end{equation*}
For the ordinary point $u(i,t)$
\begin{eqnarray*}
 u(i,t+\tau)-u(i,t)&=&\frac{u_i^tu_{i-1}^t}{2}(u_i^t+u_{i-1}^t-1)(u_{i-1}^t-u_i^t)+\frac{u_i^tu_{i+1}^t}{2}(u_i^t+u_{i+1}^t-1)(u_{i+1}^t-u_i^t)
\\
&\leq& 1/2[(u_{k_1}^t-u_i^t)]+1/2[(u_{k_1}^t-u_i^t)]
\\
&\leq& u_{k_1}^t-u_i^t.
\end{eqnarray*}
Again
\begin{eqnarray*}
 u(i,t+\tau)-u(i,t)&=&\frac{u_i^tu_{i-1}^t}{2}(u_i^t+u_{i-1}^t-1)(u_{i-1}^t-u_i^t)+\frac{u_i^tu_{i+1}^t}{2}(u_i^t+u_{i+1}^t-1)(u_{i+1}^t-u_i^t)
\\
&\geq& 1/2[(u_{k_2}^t-u_i^t)]+1/2[(u_{k_2}^t-u_i^t)]
\\
&\geq& u_{k_2}^t-u_i^t.
\end{eqnarray*}
Then by the iteration method, we obtain the following maximum principal of Equation \eqref{lattice-eq} with Dirichlet boundary
\begin{equation*}
 \min_{1\leq j\leq N}u(j,0)\leq u(j,t)\leq\max_{1\leq j\leq N}u(j,0).
\end{equation*}
\end{proof}

In the following, we investigate the monotonicity of solution of Equation \eqref{lattice-eq} with Dirichlet boundary condition. Here we only focus on the increasing initial solution case and the ideas for the decreasing initial solution case are the same.

\begin{thm}[Monotonicity]\label{Monotone}
Suppose the initial solution of Equation \eqref{lattice-eq} satisfies
\begin{equation}\label{Initial-mono}
1/2=\alpha\leq u(1,0)\leq u(3,0)\leq \cdots\leq u(N-1,0)\leq 1,
\end{equation}
then we have
\begin{equation}\label{general-mono}
1/2=\alpha\leq u(1,t)\leq u(2,t)\leq \cdots\leq u(N-1,t)\leq1,\quad \forall t>0.
\end{equation}
\end{thm}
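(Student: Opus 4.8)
The plan is to run an induction on the discrete time levels $t=0,\tau,2\tau,\dots$. Since $\tau$ is fixed, it suffices to prove that whenever
\[
1/2\leq u(1,t)\leq u(2,t)\leq\cdots\leq u(N-1,t)\leq 1,\qquad u(0,t)=u(N,t)=0,
\]
the same chain of inequalities holds with $t$ replaced by $t+\tau$. The two outer bounds $1/2\leq u(j,t+\tau)\leq 1$ come essentially for free: the hypothesis \eqref{Initial-mono} is stronger than the hypothesis of Theorem \ref{thm4}, so the maximum principle already guarantees $1/2\leq u(j,s)\leq 1$ for all $s\geq 0$ and all $1\leq j\leq N-1$. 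Hence the only thing to establish at each step is the ordering $u(j,t+\tau)\leq u(j+1,t+\tau)$ for $j=1,\dots,N-2$.

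The key preliminary estimate is on the coefficients $C_j^t$ from \eqref{C-coef}. Writing \eqref{lattice-eq} in the form \eqref{lattice-eq-3}, for $2\leq j\leq N-1$ both $u_j^t$ and $u_{j-1}^t$ lie in $[1/2,1]$, so $u_j^t+u_{j-1}^t-1\in[0,1]$ and $u_j^tu_{j-1}^t\leq 1$; therefore $0\leq C_j^t\leq 1/2$. At the two endpoints one has $C_1^t=C_N^t=0$ because $u_0^t=u_N^t=0$. The inequality $C_j^t\leq 1/2$ is a discrete CFL-type condition and is precisely what keeps the scheme order preserving.

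Next I would compute, directly from \eqref{lattice-eq-3}, the one-step difference
\[
u_{j+1}^{t+\tau}-u_j^{t+\tau}=(1-2C_{j+1}^t)(u_{j+1}^t-u_j^t)+C_{j+2}^t(u_{j+2}^t-u_{j+1}^t)+C_j^t(u_j^t-u_{j-1}^t),
\]
valid for $1\leq j\leq N-2$. Each of the three summands on the right is nonnegative: $1-2C_{j+1}^t\geq 0$ and $u_{j+1}^t-u_j^t\geq 0$ by the previous two steps; $C_{j+2}^t,C_j^t\geq 0$; and the remaining two differences are nonnegative by the inductive hypothesis. The only points needing care are the boundary indices. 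When $j=1$ the last summand contains $u_1^t-u_0^t$, which need not be nonnegative, but $C_1^t=0$ makes it vanish; symmetrically, when $j=N-2$ the middle summand contains $u_N^t-u_{N-1}^t\leq 0$, but $C_N^t=0$ makes it vanish. Thus $u_j^{t+\tau}\leq u_{j+1}^{t+\tau}$ for all admissible $j$, which together with the maximum-principle bounds completes the inductive step and hence proves \eqref{general-mono}.

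The argument involves no deep obstacle; the only slightly delicate ingredients are the uniform bound $C_j^t\leq 1/2$ (so that the ``diagonal'' factor $1-2C_{j+1}^t$ stays nonnegative) and the bookkeeping of the two boundary terms, which is where the Dirichlet condition $u_0^t=u_N^t=0$ enters decisively. The decreasing-data case follows from the identical computation with all inequalities reversed, or from the reflection symmetry $j\mapsto N-j$ of the scheme.
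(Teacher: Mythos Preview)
Your proof is correct and is essentially the same as the paper's: the paper packages the identity
\[
u_{j+1}^{t+\tau}-u_j^{t+\tau}=(1-2C_{j+1}^t)(u_{j+1}^t-u_j^t)+C_{j+2}^t(u_{j+2}^t-u_{j+1}^t)+C_j^t(u_j^t-u_{j-1}^t)
\]
as a matrix equation $\Delta U^{t+\tau}=[C]^t\Delta U^t$ and observes that $[C]^t$ is entrywise nonnegative because $0\leq C_j^t\leq 1/2$, which is exactly your componentwise argument. Your treatment of the boundary cases via $C_1^t=C_N^t=0$ is slightly more explicit than the paper's, but the substance is identical.
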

\begin{proof}
The initial solution is increasing in the interior of the domain, we have
\begin{equation}\label{Initial}
 1/2=\alpha<  u(1,0)\leq\dots\leq u(N-1,0)< 1.
\end{equation}
From Equation \eqref{lattice-eq}, we have
\begin{equation}\label{Var-eq}
 \Delta U^{t+\tau}=[C]^t\Delta U^t,
\end{equation}
where
\begin{equation*}
 \Delta U^{t+\tau}=[(u_2^{t+\tau}-u_1^{t+\tau}),\dots,(u_{N-1}^{t+\tau}-u_{N-2}^{t+\tau})]^T,
\end{equation*}
\begin{equation*}
[C]^t=
\left(
         \begin{array}{ccccccccc}
           1-2C_2^t & C_3^t & 0 &  \dots & 0 & 0 & 0 \\
            C_2^t & 1-2C_3^t & C_4^t & \dots & 0 & 0 & 0  \\
           \vdots & \vdots & \vdots & \ddots & \vdots & \vdots & \vdots \\

           0 & 0 & 0 & \dots &C_{N-3}^t & 1-2C_{N-2}^t & C_{N-1}^t  \\
           0 & 0 & 0  & \dots & 0 &   C_{N-2}^t &1-2 C_{N-1}^t\\

         \end{array}
       \right)
\end{equation*}
and
\begin{equation}\label{C-condition}
 C_j^t=\frac{u_j^tu_{j-1}^t}{2}(u_j^t+u_{j-1}^t-1),\quad j=2,3,\cdots,N-1.
\end{equation}
From Theorem \ref{thm4}, we have $1/2\leq u_j^t\leq 1$, $0\leq C_j^t=\frac{u_j^tu_{j-1}^t}{2}(u_j^t+u_{j-1}^t-1)\leq 1/2$, so $[C]^t$ is a nonnegative matrix. Initially $\Delta U^0\geq 0$, and by the iteration method we have $\Delta U^t\geq 0$ for all $t\geq 0$ which leads to the conservation of monotonicity of $u_j^t$ in $1\leq j\leq N-1$ for all $t>0$.
\end{proof}

\begin{rk}\label{mono-eq}
Initially, we can suppose $1/2\leq u(1,0)\leq u(2,0)\leq \cdots\leq u(N-1,0)\leq 1$. Except for two trivial cases $1/2= u(1,0)= u(2,0)= \cdots= u(N-1,0)$ and $ u(1,0)= u(2,0)= \cdots= u(N-1,0)=1$, by the iteration equation \eqref{lattice-eq}, the discrete solution will have the relation
\begin{equation}\label{Initial-mono}
1/2< u(1,t^*)\leq u(2,t^*)\leq \cdots\leq u(N-1,t^*)< 1,
\end{equation}
where $t^* >0$ and the initial relation becomes to \eqref{Initial-mono}.
\end{rk}

\begin{thm}[Asymptotic behavior]\label{Asymp-thm}
Suppose $1/2=\alpha\leq u(j,0)\leq 1,$ $ j=1,\dots,N-1$, $u(0,0)= u(N,0)=0$, then Equation \eqref{lattice-eq} has the following asymptotic convergence result:
\begin{equation}\label{Asymp-beha}
  \lim_{t\rightarrow \infty} u(j,t)=\frac{1}{N-1}\sum_{j=1}^{N-1} u(j,0).
 \end{equation}
\end{thm}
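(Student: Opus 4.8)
The plan is to run a discrete energy (Lyapunov) argument: the $\ell^2$ energy of the interior profile is nonincreasing, its per–step decrement dominates a weighted sum of squared nearest–neighbour differences, and since the weights can only degenerate at profiles where the relevant neighbours already agree, the differences must tend to zero; conservation then pins down the common limit.

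First I would collect the two facts already in hand. By Theorem~\ref{thm4} we have $1/2\le u(j,t)\le 1$ for all $j$ and $t$, so in the form \eqref{lattice-eq-3} the coefficients obey $0\le C_j^t\le 1/2$, with $C_1^t=C_N^t=0$ because $u_0^t=u_N^t=0$; by Theorem~\ref{Main-thm6} the sum $S:=\sum_{j=1}^{N-1}u(j,t)$ is independent of $t$, and the asserted limit is $\bar u:=S/(N-1)$. Now set $E^t:=\sum_{j=1}^{N-1}(u_j^t)^2$ and $\delta_j:=u_j^{t+\tau}-u_j^t=C_j^t(u_{j-1}^t-u_j^t)+C_{j+1}^t(u_{j+1}^t-u_j^t)$. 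Expanding $E^{t+\tau}-E^t=2\sum_j u_j^t\delta_j+\sum_j\delta_j^2$, a discrete summation by parts (using $C_1^t=C_N^t=0$) gives $\sum_j u_j^t\delta_j=-\sum_{j=2}^{N-1}C_j^t(u_j^t-u_{j-1}^t)^2$, while $(a+b)^2\le 2a^2+2b^2$ gives $\sum_j\delta_j^2\le 4\sum_{j=2}^{N-1}(C_j^t)^2(u_j^t-u_{j-1}^t)^2$. Combining,
\[
E^t-E^{t+\tau}\ \ge\ 2\sum_{j=2}^{N-1}C_j^t\bigl(1-2C_j^t\bigr)\bigl(u_j^t-u_{j-1}^t\bigr)^2\ \ge\ 0 .
\]
Hence $E^t$ is nonincreasing and bounded below, so it converges; summing the inequality over $t=0,\tau,2\tau,\dots$ forces $\sum_{j=2}^{N-1}C_j^t(1-2C_j^t)(u_j^t-u_{j-1}^t)^2\to 0$, and in particular each term $C_j^t(1-2C_j^t)(u_j^t-u_{j-1}^t)^2\to0$.

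The key step, and the only one needing genuine care, is to upgrade this to $u_j^t-u_{j-1}^t\to0$ for $j=2,\dots,N-1$. For $(a,b)\in[1/2,1]^2$ put $C(a,b):=\tfrac{ab}{2}(a+b-1)$. Since $ab\ge 1/4$, we have $C(a,b)=0$ iff $a+b=1$ iff $a=b=1/2$; and since $ab\le 1$ and $a+b-1\le 1$ with equality in both only at $a=b=1$, we have $C(a,b)=\tfrac12$ iff $a=b=1$. Thus the continuous function $g(a,b):=C(a,b)\bigl(1-2C(a,b)\bigr)(a-b)^2$ vanishes on $[1/2,1]^2$ exactly on the diagonal $\{a=b\}$, so for every $\varepsilon>0$ the compact set $\{(a,b)\in[1/2,1]^2:\ |a-b|\ge\varepsilon\}$ is disjoint from the zero set of $g$ and hence $g$ is bounded below there by a positive constant. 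Applying this with $(a,b)=(u_j^t,u_{j-1}^t)$ together with $g(u_j^t,u_{j-1}^t)\to0$ yields $u_j^t-u_{j-1}^t\to0$. Consequently $u_k^t-u_1^t\to0$ for every interior $k$, so $u_k^t-\bar u=\frac1{N-1}\sum_{j=1}^{N-1}(u_k^t-u_j^t)\to0$ by the conservation identity, which is precisely \eqref{Asymp-beha}.

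The main obstacle is exactly the degeneracy of the coefficients $C_j^t$: since they may approach $0$ or $\tfrac12$, the one–step map is not a uniformly primitive doubly stochastic operator, so one cannot simply invoke a spectral–gap or mixing estimate. The energy identity circumvents this by producing a dissipation term that is itself a function of the current differences, after which the elementary observation that $C_j^t$ degenerates only when the two adjacent values coincide (both equal $\tfrac12$, or both equal $1$) closes the argument. A softer alternative would use that $\max_j u(j,t)$ is nonincreasing and $\min_j u(j,t)$ nondecreasing (as in the proof of Theorem~\ref{thm4}), extract a subsequential limit $u^\ast$ of the orbit whose whole forward trajectory under the one–step map has constant maximum and minimum, and rule out $\max u^\ast>\min u^\ast$ by the same degeneracy analysis; but the energy route is cleaner and quantitative, so I would present that one.
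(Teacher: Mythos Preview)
Your argument is correct and takes a genuinely different route from the paper. The paper uses the total variation $V^t=\sum_{j=2}^{N-1}|u_j^t-u_{j-1}^t|$ as Lyapunov functional; from the tridiagonal identity \eqref{Var-eq} one only gets dissipation at the two boundary edges, namely $V^t-V^{t+\tau}\ge C_2^t|u_2^t-u_1^t|+C_{N-1}^t|u_{N-1}^t-u_{N-2}^t|$, so only $C_2^t|u_2^t-u_1^t|\to0$ and $C_{N-1}^t|u_{N-1}^t-u_{N-2}^t|\to0$ follow directly. A separate case analysis (either $C_2^t\to0$, forcing $u_1^t,u_2^t\to1/2$ and then propagating via \eqref{lattice-eq}, or $|u_2^t-u_1^t|\to0$ and then again propagating) is needed to reach the interior edges. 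Your $\ell^2$ energy yields dissipation $\sum_{j}C_j^t(1-2C_j^t)(u_j^t-u_{j-1}^t)^2$ across \emph{all} interior edges at once, so the single compactness observation that $g(a,b)=C(a,b)(1-2C(a,b))(a-b)^2$ vanishes on $[1/2,1]^2$ only on the diagonal closes the argument without any propagation step. This is cleaner and more quantitative; the paper's $\ell^1$ functional is simpler to write down but leaves more work for the endgame.
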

\begin{proof}
In the trivial cases $ u(1,0) = u(2,0)=\dots=u(N-1,0) =1/2$ and $ u(1,0) = u(2,0)=\dots=u(N-1,0) =1$, it is easy to obtain the relation \eqref{Asymp-beha}. In the general case, by maximum principal theorem \ref{thm4} we have
\begin{equation}
1/2\leq u(j,t)\leq 1,\quad j=1,\dots,N-1.
\end{equation}

From Equation \eqref{Var-eq}, we have
\begin{eqnarray*}
 u_2^{t+\tau} - u_1^{t+\tau} &=& (1 -2C_2^t)(u_2^t-u_1^t) + C_3^t(u_3^t-u_2^t),
 \\
 u_3^{t+\tau} - u_2^{t+\tau} &=& C_2^t(u_2^t-u_1^t) +(1- 2C_3^t)(u_3^t-u_2^t) + C_4^t(u_4^t-u_3^t),
 \\
\vdots&&\vdots
\\
u_i^{t+\tau} - u_{i-1}^{t+\tau} &=& C_{i-1}^t(u_{i-1}^t-u_{i-2}^t) +(1- 2C_i^t)(u_i^t-u_{i-1}^t) + C_{i+1}^t(u_{i+1}^t-u_i^t),
\\
\vdots&&\vdots
\\
u_{N-1}^{t+\tau} - u_{N-2}^{t+\tau} &=& C_{N-2}^t(u_{N-2}^t-u_{N-3}^t) + (1 -2C_{N-1}^t)(u_{N-1}^t-u_{N-2}^t).
\end{eqnarray*}
By the maximum principle Theorem \ref{thm4}, we have $1/2\leq u_j^t\leq 1$, $0\leq C_j^t=\frac{u_j^tu_{j-1}^t}{2}(u_j^t+u_{j-1}^t-1)\leq 1/2$, Then
\begin{eqnarray}
\sum_{j=2}^{N-1}|u_{i}^{t+\tau} - u_{i-1}^{t+\tau}| &\leq& \sum_{j=2}^{N-1}|u_{i}^{t} - u_{i-1}^{t}| - C_2^t|u_2^t-u_1^t|- C_{N-1}^t|u_{N-1}^t-u_{N-2}^t|\nonumber
\\
&\leq& \sum_{j=2}^{N-1}|u_{i}^{t} - u_{i-1}^{t}|,\quad \forall t\geq 0,\label{Var-dec}
\end{eqnarray}
and $\sum_{j=2}^{N-1}|u_{i}^{t} - u_{i-1}^{t}|$ is a decreasing function of time $t$ which leads to the convergence result:
\begin{equation}\label{Var-asymp}
\lim_{t\to\infty}\sum_{j=2}^{N-1}|u_{i}^{t+\tau} - u_{i-1}^{t+\tau}| = \lim_{t\to\infty}\sum_{j=2}^{N-1}|u_{i}^{t} - u_{i-1}^{t}| - \lim_{t\to\infty}C_2^t|u_2^t-u_1^t|- \lim_{t\to\infty}C_{N-1}^t|u_{N-1}^t-u_{N-2}^t|.
\end{equation}
Because $0\leq C_i^t\leq1/2$, $(2\leq i\leq N-1)$, we can obtain that
\begin{equation}\label{Var-lim}
\lim_{t\to\infty}C_2^t|u_2^t-u_1^t|= \lim_{t\to\infty}C_{N-1}|u_{N-1}^t-u_{N-2}^t|=0.
\end{equation}
In the case
\begin{equation*}
\lim_{t\to\infty}C_2^t=\lim_{t\to\infty}\frac{u_2^tu_1^t}{2}(u_2^t +u_1^t -1)=0,
\end{equation*}
by the maximum principle theorem \ref{thm4}, the only possibility is
\begin{equation}
\lim_{t\to\infty}u_1^t = \lim_{t\to\infty}u_2^t = 1/2.
\end{equation}
Then by the iteration equation \eqref{lattice-eq} and the maximum principle theorem \ref{thm4}, we have
\begin{equation}
\lim_{t\to\infty}u_1^t = \lim_{t\to\infty}u_2^t =\dots=\lim_{t\to\infty}u_{N-1}^t=1/2
\end{equation}
which is the trivial case $u(1,0)=u(2,0)=\dots=u(N-1,0)=1/2$ by the conservation theorem \ref{Main-thm6}. The same result holds for $\lim_{t\to\infty}C_{N-1}^t =0$.

In the case that
\begin{equation}
\lim_{t\to\infty}|u_2^t-u_1^t|=0,
\end{equation}
and  we assume $\lim_{t\to\infty} u_1^t =\lim_{t\to\infty} u_2^t >1/2$, otherwise it will go back to the trivial case $u(1,0)=u(2,0)=\dots=u(N-1,0)=1/2$.
Then by iteration Equation \eqref{lattice-eq}, the maximum principle theorem \ref{thm4}, and the conservation Theorem \ref{Main-thm6} we have
\begin{equation}\label{Var-lim}
\lim_{t\to\infty}|u_3^t-u_2^t|= \lim_{t\to\infty}|u_4^t-u_3^t|=\cdots=\lim_{t\to\infty}|u_{N-1}^t-u_{N-2}^t|=0,
\end{equation}
which leads to Equation \eqref{Asymp-beha}. Numerical simulation see Figure \ref{Fig:Data3} where the time period $\tau =0.1$ and $h=1/500$.
\begin{figure}[!htb]
   \begin{minipage}{1.00\textwidth}
     \centering
     \includegraphics[width=1.00\linewidth]{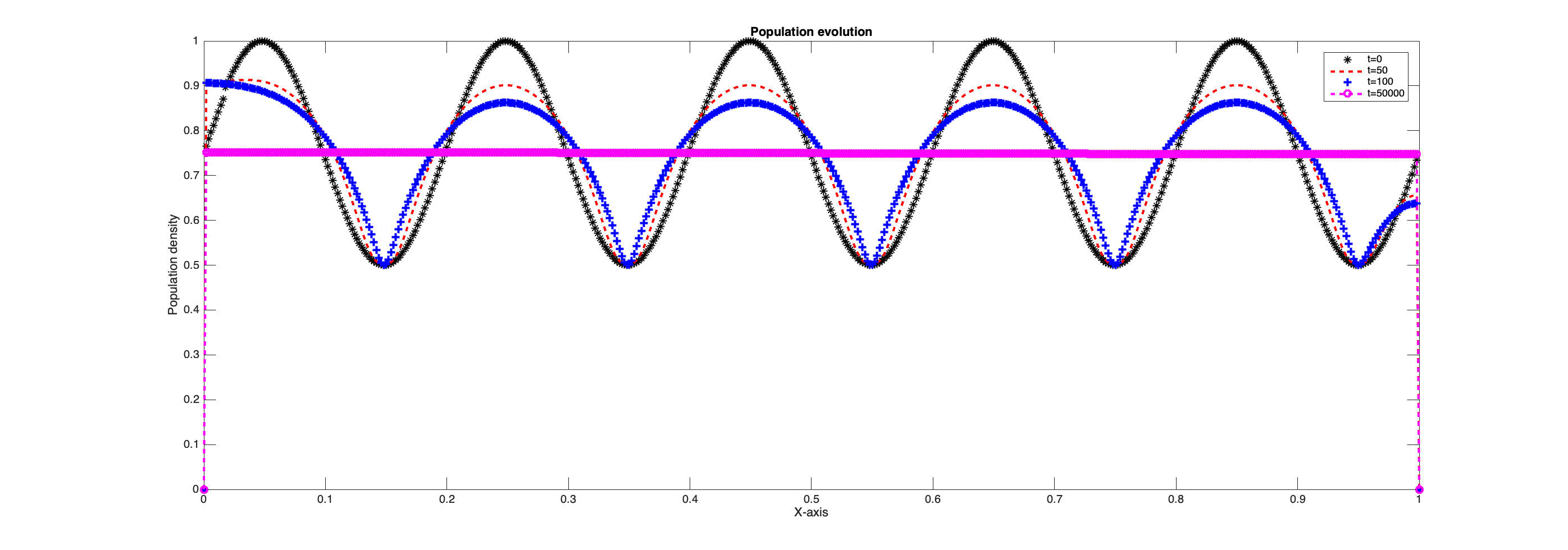}
     \caption{Initial density $u(x,0)=\frac{1}{4}\sin(10\pi)+\frac{3}{4}$ }\label{Fig:Data3}
   \end{minipage}\hfill
\end{figure}

\end{proof}

\subsection{Asymptotic behaviors under special case when $N\leq5$}

In this section, we consider the asymptotic behaviors of the solution of \eqref{lattice-eq} in a special case when $N=5$ with time period $\tau =0.1$ and $u(0,t)=u(4,t)=0$ for $t\geq 0$ which corresponding to the Dirichlet boundary condition. In the case $N=3,4$ with Dirichlet boundary condition, the asymptotic behaviors of the solutions are easy to be obtained.
We rewrite the lattice model \eqref{lattice-eq} in the following way:
\begin{eqnarray}
 u_1^{t+\tau}&=&u_1^t+\frac{u_2^tu_{1}^t}{2}(u_1^t+u_{2}^t-1)(u_{2}^t-u_1^t),\label{Lattice-1}
 \\
 u_2^{t+\tau}&=&u_2^t+C_2^t(u_{1}^t-u_2^t)+C_3^t(u_{3}^t-u_2^t),\label{Lattice-2}
 \\
 u_1^{t+\tau}+u_2^{t+\tau}&=&u_1^t+u_2^t+\frac{u_2^tu_{3}^t}{2}(u_2^t+u_{3}^t-1)(u_{3}^t-u_2^t),\label{Lattice-3}
 \\
 u_2^{t+\tau}-u_1^{t+\tau}&=&(u_2^t-u_1^t)(1-2C_2^t)+\frac{u_2^tu_{3}^t}{2}(u_2^t+u_{3}^t-1)(u_{3}^t-u_2^t),\label{Lattice-4}
 \\
 u_3^{t+\tau}&=&u_3^t+\frac{u_3^tu_{2}^t}{2}(u_3^t+u_{2}^t-1)(u_{2}^t-u_3^t),\label{Lattice-5}
 \\
 u_2^{t+\tau}+u_3^{t+\tau}&=&u_2^t+u_3^t+\frac{u_2^tu_{1}^t}{2}(u_2^t+u_{1}^t-1)(u_{1}^t-u_2^t),\label{Lattice-6}
 \\
 u_3^{t+\tau}-u_2^{t+\tau}&=&(u_3^t-u_2^t)(1-2C_3^t)+\frac{u_2^tu_{1}^t}{2}(u_2^t+u_{1}^t-1)(u_{2}^t-u_1^t).\label{Lattice-7}
\end{eqnarray}
In the following, we consider the asymptotic behaviors of Equation \eqref{lattice-eq} with different initial solutions. By the bounded-ness Theorem \ref{Main-thm5}, the solution of \eqref{lattice-eq} is bounded in the domain $[0,1]$ given $0\leq u(i,0) \leq 1, i=0,1,\cdots, 4$.


\par {\bf Case 1}: $u(1,0)+u(2,0)<1, u(2,0)+u(3,0)<1$.
\par Subcase 1: $u(2,0)$ is the minimum point.
From equations \eqref{Lattice-1}-\eqref{Lattice-6}, we have
 $u_1^t,u_3^{t}$ are increasing, $u_2^t,u_1^{t}+u_2^{t},u_2^{t}+u_3^{t}$ are decreasing and $u(j,t)$ are bounded in $[0,1]$ which leads to the existence of the limit of each term and the following asymptotic behaviors of the solution (see Figure \ref{Fig:Data4})
\begin{eqnarray*}
 \lim_{t\rightarrow \infty}u_1^{t}&=&\overline{u}_1,
 \\
 \lim_{t\rightarrow \infty}u_2^{t}&=&\overline{u}_2=0,
 \\
 \lim_{t\rightarrow \infty}u_3^{t}&=&\overline{u}_3.
\end{eqnarray*}
\begin{figure}[!htb]
   \begin{minipage}{1.00\textwidth}
     \centering
     \includegraphics[width=1.00\linewidth]{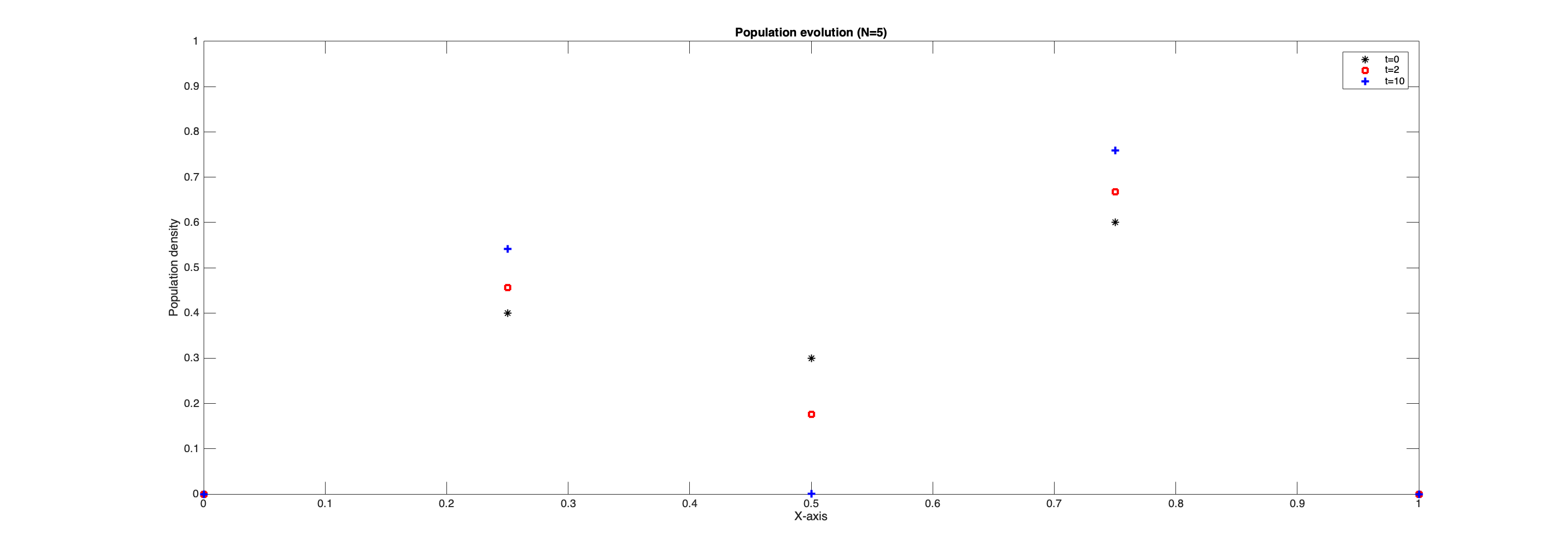}
     \caption{Initial density $u(1,0)=.4, u(2,0)=.3,u(3,0)=.6$ }\label{Fig:Data4}
   \end{minipage}\hfill
\end{figure}
\par Subcase 2: $u(1,0)+u(2,0)+u(3,0)<1$ and $u(2,0)$ is the maximum point.
 \par
From equations \eqref{Lattice-1}-\eqref{Lattice-6}, we have
 $u_1^t,u_3^{t}$ are decreasing, $u_2^t$ is increasing and $u(j,t)$ are bounded in $[0,1]$ which leads to the existence of the limit of each term and the following asymptotic behaviors of the solution (see Figure \ref{Fig:Data5})
\begin{eqnarray*}
 \lim_{t\rightarrow \infty}u_1^{t}&=&\overline{u}_1=0,
 \\
 \lim_{t\rightarrow \infty}u_2^{t}&=&\overline{u}_2=u(1,0)+u(2,0)+u(3,0),
 \\
 \lim_{t\rightarrow \infty}u_3^{t}&=&\overline{u}_3=0.
\end{eqnarray*}
\begin{figure}[!htb]
   \begin {minipage}{1.00\textwidth}
     \centering
     \includegraphics[width=1.00\linewidth]{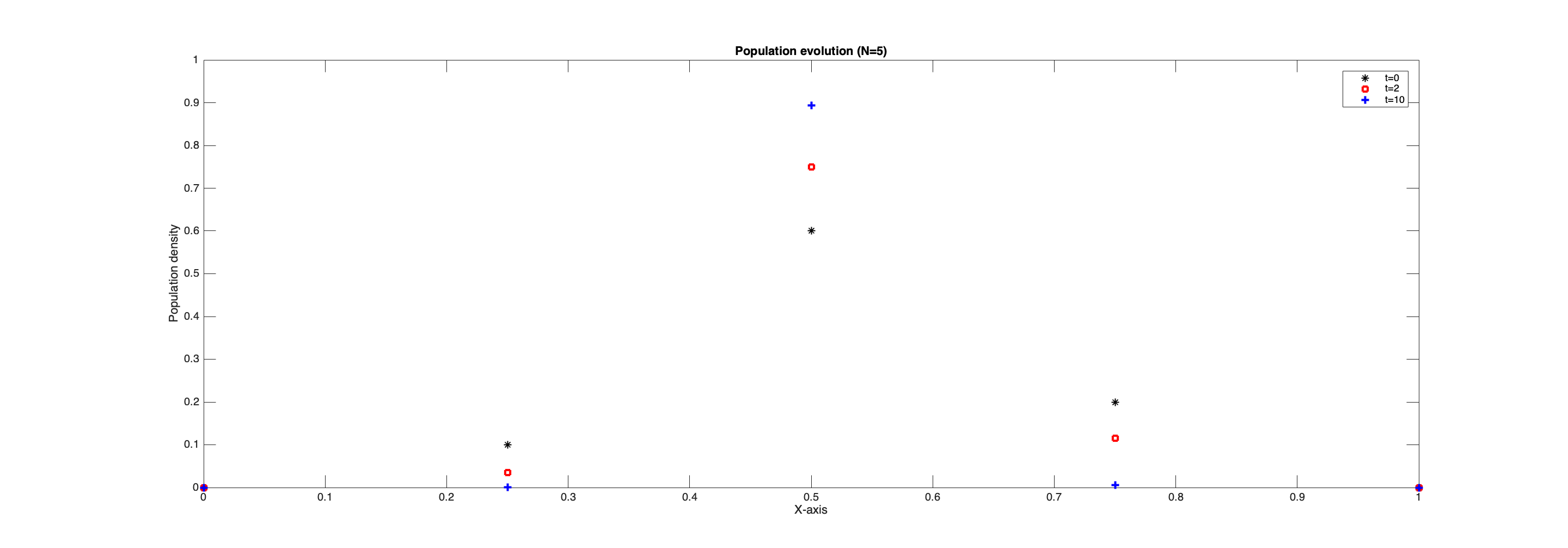}
     \caption{Initial density $u(1,0)=.1, u(2,0)=.6,u(3,0)=.2$}\label{Fig:Data5}
   \end{minipage}
\end{figure}
\par Subcase 3: $u(1,0)+u(2,0)+u(3,0)<1$ and $u(3,0)$ is the maximum point, $u(1,0)$ is the minimum point, otherwise it is the subcase 1 (When $u(1,0)$ is the maximum, the idea is the same as $u(3,0)$ is the maximum.).
\par
From equations \eqref{Lattice-1}-\eqref{Lattice-6}, we assume that $u^{t+\tau}_3>u^{t+\tau}_2>u^{t+\tau}_1$, otherwise it becomes to the subcase 1 or the subcase 2,  and the limits exist. In this case $u_3^t$ is increasing and $u(j,t)$ are bounded in $[0,1]$ which leads to the existence of the limit of each term and the following asymptotic behaviors of the solution (see Figure \ref{Fig:Data6})
\begin{eqnarray*}
 \lim_{t\rightarrow \infty}u_1^{t}&=&\overline{u}_1 =0,
 \\
 \lim_{t\rightarrow \infty}u_2^{t}&=&\overline{u}_2=0,
 \\
 \lim_{t\rightarrow \infty}u_3^{t}&=&\overline{u}_3=u(1,0)+u(2,0)+u(3,0).
\end{eqnarray*}
\begin{figure}[!htb]
\begin{minipage}{1.00\textwidth}
     \centering
     \includegraphics[width=1.00\linewidth]{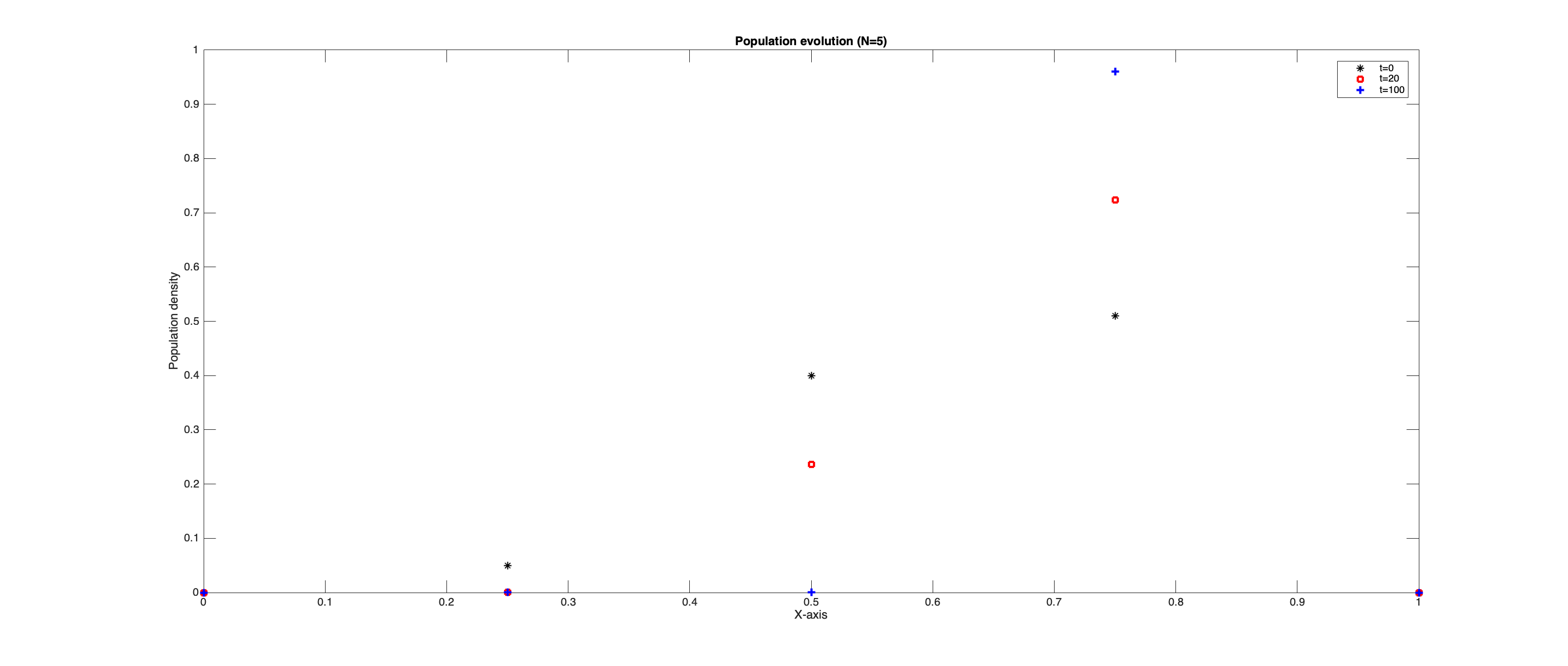}
     \caption{Initial density $u(1,0)=.05, u(2,0)=.4,u(3,0)=.51$ }\label{Fig:Data6}
   \end{minipage}\hfill
\end{figure}
\par Subcase 4: $1\leq u(1,0)+u(2,0)+u(3,0)$ and $u(3,0)$ is the maximum (When $u(1,0)$ is the maximum, the idea is the same as $u(3,0)$ is the maximum.).
\par If $u(2,0)$ is the minimum, it goes to the subcase 1. So we consider the case when $u(3,0)>u(2,0)>u(1,0)$. From equations \eqref{Lattice-1}-\eqref{Lattice-6}, we have
 $u_1^t,u_1^{t}+u_2^{t}$ are decreasing, $u_3^t,u_2^{t}+u_3^{t}$ are increasing first. If $u_1^{t}+u_2^{t} \leq 1$ and $u_2^{t}+u_3^{t}\leq 1$ for all time $t$, and $u(2,t)$ is the minimum after some time point $t^*$, it goes back to subcase 1 and we have the asymptotic convergent result. In other case, $u_1^{t}+u_2^{t} \leq 1$ and $u_2^{t}+u_3^{t}\leq 1$, and $u(3,t) > u(2,t) > u(1,t)$ for all time $t$, then by the monotonicity of $u(1,t), u(3,t)$ and bounded-ness properties of them, we also have the convergent results
 \begin{equation*}
\lim_{t\rightarrow\infty}u_1^t=0,\quad \lim_{t\rightarrow\infty}u_2^t=\lim_{t\rightarrow\infty}u_3^t = \frac{1}{2}[u(1,0) + u(2,0) +u(3,0)].
\end{equation*}
  We have $1\leq u(1,0)+u(2,0)+u(3,0)$, when $ u_2^{t^*}+u_3^{t^*} > 1$  and $u_1^{t^*}+u_2^{t^*} < 1$ at some point $t^*$, it becomes to case 3 below and we have the asymptotic limits.
\par {\bf Case 2}: $u(1,0)+u(2,0)>1$ and $u(2,0)+u(3,0)>1$.
\par Subcase 1: $u(2,0)$ is the minimum point. From Equation \eqref{Lattice-3} and \eqref{Lattice-6}, we can get $u_1^t+u_2^t>1, u_2^t+u_3^t>1$ for all $t>0$. So we can get the following
\begin{eqnarray*}
 |u_2^{t+\tau}-u_1^{t+\tau}|&\leq& (1-2C_2^t)|u_2^t-u_1^t|+ C_3^t|u_3^t-u_2^t|,
 \\
  |u_3^{t+\tau}-u_2^{t+\tau}|&\leq& (1-2C_3^t)|u_3^t-u_2^t|+C_2^t|u_2^t-u_1^t|,
\end{eqnarray*}
and then
\begin{eqnarray*}
|u_2^{t+\tau}-u_1^{t+\tau}| + |u_3^{t+\tau}-u_2^{t+\tau}| &\leq& (1-C_2^t)|u_2^t-u_1^t| + (1-C_3^t)|u_3^t-u_2^t|
\\
&<& |u_2^t-u_1^t| + |u_3^t-u_2^t|
\end{eqnarray*}
which equivalent to the decreasing of the total variation of population density in time $t$ and leads to the convergence of the asymptotic behaviors of $u(1,t),u(2,t)$ and $u(3,t)$ (see Figure \ref{Fig:Data7}).
\begin{equation*}
\lim_{t\rightarrow\infty}u_1^t=\lim_{t\rightarrow\infty}u_2^t=\lim_{t\rightarrow\infty}u_3^t = \frac{1}{3}[u(1,0) + u(2,0) +u(3,0)].
\end{equation*}
\begin{figure}[!htb]
   \begin{minipage}{1.00\textwidth}
     \centering
     \includegraphics[width=1.00\linewidth]{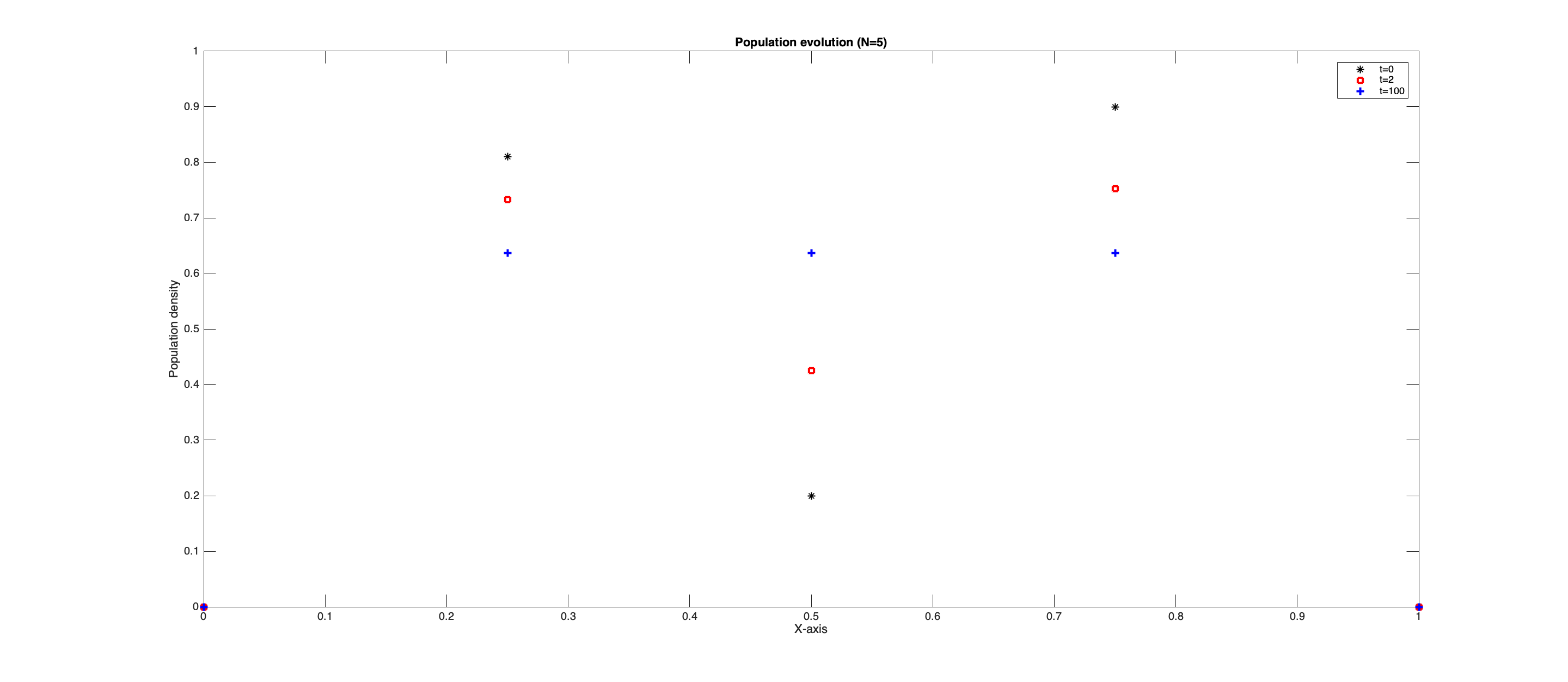}
     \caption{Initial density $u(1,0)=.81, u(2,0)=.2,u(3,0)=.9$ }\label{Fig:Data7}
   \end{minipage}\hfill
\end{figure}

\par Subcase 2: $u(3,0)>u(2,0)>u(1,0)$. From Equation \eqref{Lattice-1}-\eqref{Lattice-7} we have that $u(1,t)+u(2,t)$ is increasing first and $u(2,t)+u(3,t)$ is decreasing but always larger than 1 ($u(1,0) + u(2,0)> 1, u(2,0) + u(3,0) >1$ indicates $u(2,t), u(3,t)$ in domain $[1/2, 1]$). From theorem \ref{thm4}, Equation \eqref{Lattice-4}, \eqref{Lattice-7}, we have $u(1,t)\leq u(2,t)\leq u(3,t)$ for all $t\geq 0$, $u(1,t)$ is increasing and $u(3,t)$ is decreasing in time $t$, and we have (see Figure \ref{Fig:Data8})
\begin{equation*}
\lim_{t\rightarrow\infty}u_1^t=\lim_{t\rightarrow\infty}u_2^t=\lim_{t\rightarrow\infty}u_3^t = \frac{1}{3}[u(1,0) + u(2,0) +u(3,0)].
\end{equation*}
\begin{figure}[!htb]
   \begin{minipage}{1.00\textwidth}
     \centering
     \includegraphics[width=1.00\linewidth]{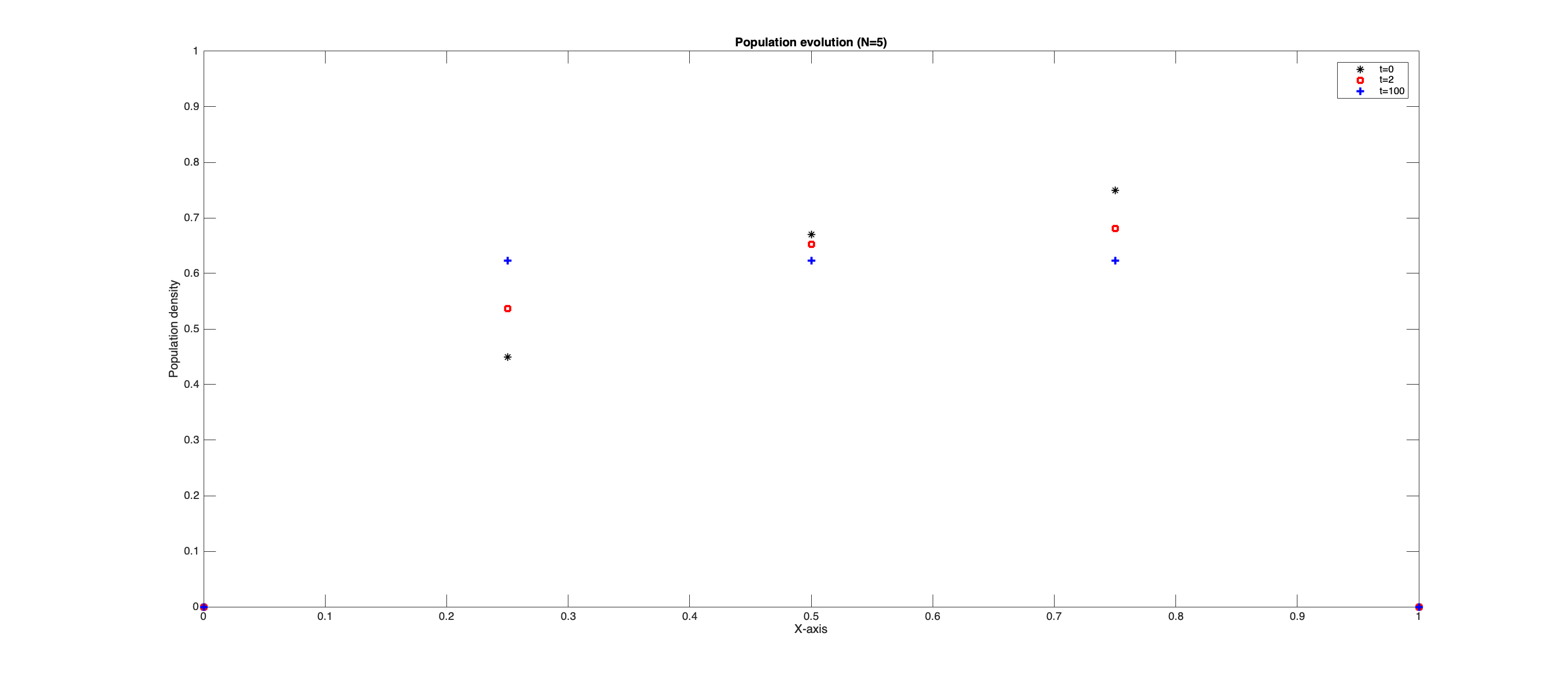}
     \caption{Initial density $u(1,0)=.45, u(2,0)=.67,u(3,0)=.75$ }\label{Fig:Data8}
   \end{minipage}\hfill
\end{figure}
\par Subcase 3: $u(3,0)<u(2,0)<u(1,0)$. We have similar results as in subcase 2 (see Figure \ref{Fig:Data9}).
\begin{figure}[!htb]
   \begin {minipage}{1.00\textwidth}
     \centering
     \includegraphics[width=1.00\linewidth]{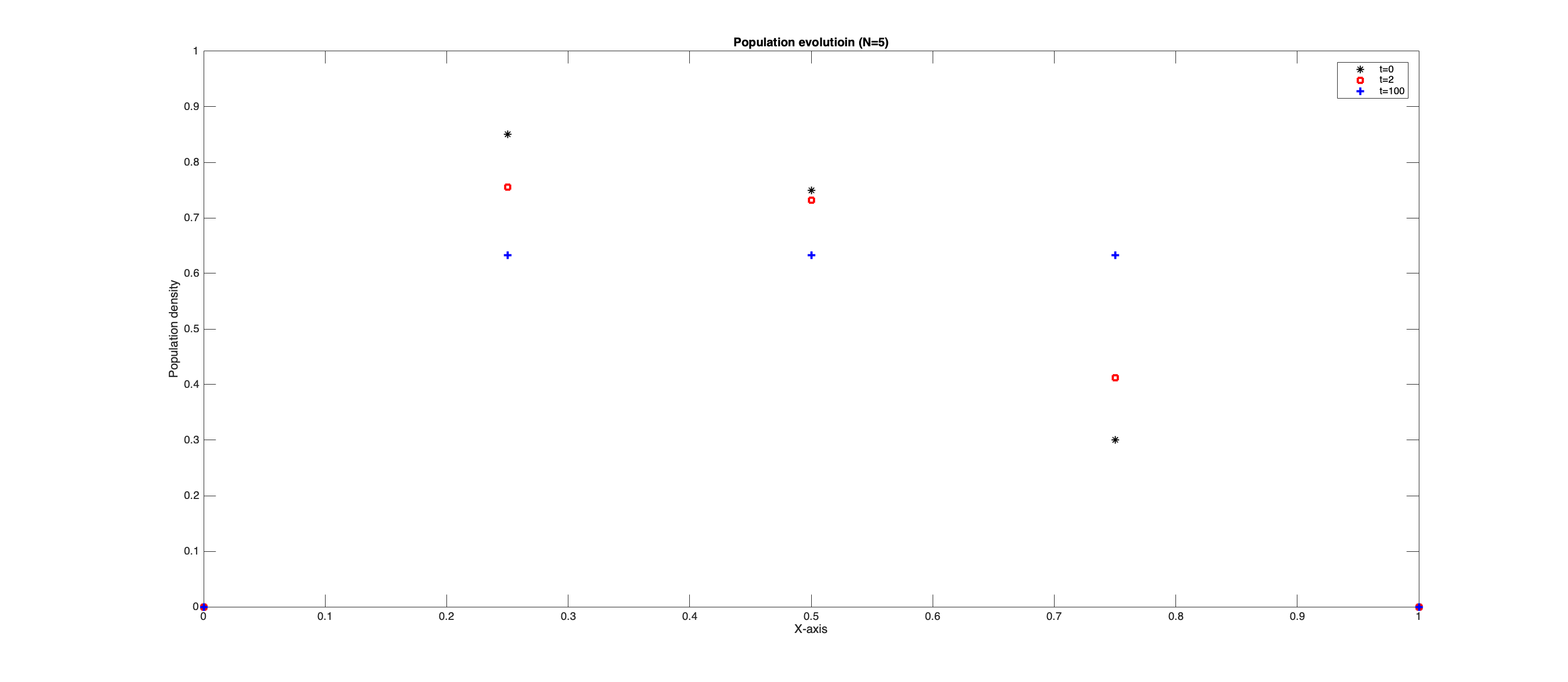}
     \caption{Initial density $u(1,0)=.85, u(2,0)=.75,u(3,0)=.3$}\label{Fig:Data9}
   \end{minipage}
\end{figure}
\par Subcase 4: $u(2,0)$ is the maximum point. From Equation \eqref{Lattice-1}-\eqref{Lattice-7}, we have $u(1,t),u(3,t)$ are increasing and $u(2,t)$ is decreasing in time first. Furthermore, we assume $u(2,t)$ is the maximum point for all time $t$, otherwise it will go to subcase 1 and subcase 2. From Equation \eqref{Lattice-3} and \eqref{Lattice-6}, $u_1^t +  u_2^t$ and $u_2^t +  u_3^t$ are decreasing. Then if $u_1^t +  u_2^t > 1$ and $u_2^t +  u_3^t > 1$ for all time $t$, from Equation \eqref{Lattice-1},\eqref{Lattice-2},\eqref{Lattice-5} $u(1,t), u(3,t)$ are increasing and $u(2,t)$ is decreasing in time $t$. We have the convergence results (see Figure \ref{Fig:Data10})
\begin{equation*}
\lim_{t\rightarrow\infty}u_1^t=\lim_{t\rightarrow\infty}u_2^t=\lim_{t\rightarrow\infty}u_3^t = \frac{1}{3}[u(1,0) + u(2,0) +u(3,0)].
\end{equation*}
If $u_1^t +  u_2^t < 1$ and $u_2^t +  u_3^t > 1$  at some point $t^*$ (similar idea for $u_1^t +  u_2^t > 1$ and $u_2^t +  u_3^t < 1$, then we go to the subcase 1 of case 3.
\begin{figure}[!htb]
   \begin{minipage}{1.00\textwidth}
     \centering
     \includegraphics[width=1.00\linewidth]{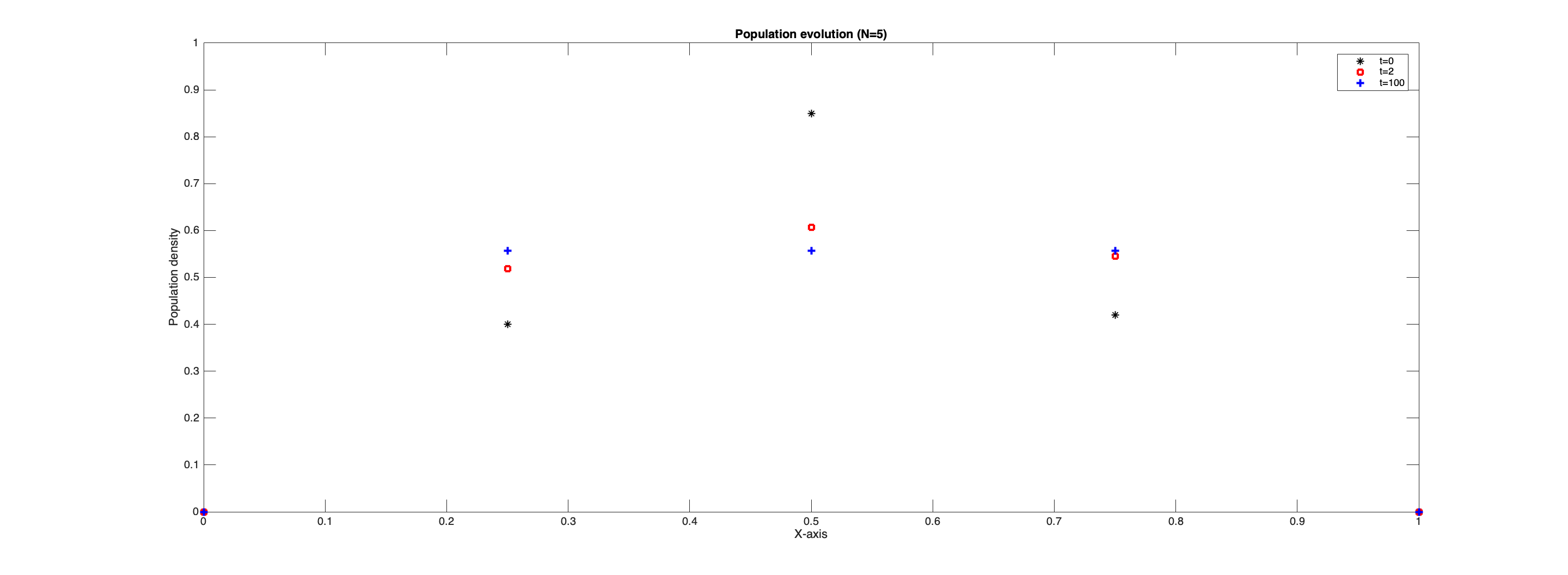}
     \caption{Initial density $u(1,0)=.40, u(2,0)=.85,u(3,0)=.42$ }\label{Fig:Data10}
   \end{minipage}\hfill
\end{figure}

%

\par {\bf Case 3}: $u(1,0)+u(2,0)<1, u(2,0)+u(3,0)>1$,
\par Subcase 1: $u(2,0) \geq  u(3,0)$. In this case $u(2,0)$ is the maximum point. From Equation \eqref{Lattice-1}-\eqref{Lattice-7}, we have $u(3,t), u(2,t) + u(3,t)$ are increasing and $u(1,t), u(1,t) + u(2,t)$ are decreasing in time $t$ first. If $u(2,t)\geq u(3,t) \geq u(1,t)$ for all time $t$, then by the bounded-ness of $u(1,t),u(2,t)$ and $u(3,t)$, we have the asymptotic convergence results (see Figure \ref{Fig:Data12}):
\begin{eqnarray*}
\lim_{t\rightarrow\infty}u_2^t&=&\lim_{t\rightarrow\infty}u_3^t=\lim_{t\rightarrow\infty}u_3^t = \frac{1}{2}[u(1,0) + u(2,0) +u(3,0)],
\\
\lim_{t\rightarrow\infty}u_1^t&=&0.
\end{eqnarray*}
If $u(3,t) \geq u(2,t) \geq u(1,t)$ at some time point $t^*$, we must have $u(2,t), u(3,t)$ in the domain $[1/2,1]$ for all time $t\geq t^*$  (the expansion of diffusion domain see \cite[Theorem 3.1]{Bao2020continuum}) and $u(1,t^*) +u(2,t^*) < 1$ and $u(2,t^*) +u(3,t^*) > 1$. The system \eqref{lattice-eq} has the asymptotic behaviors as discussed in the following subcase 2 (see Figure \ref{Fig:Data13}).
\begin{figure}[!htb]
   \begin{minipage}{1.00\textwidth}
     \centering
     \includegraphics[width=1.00\linewidth]{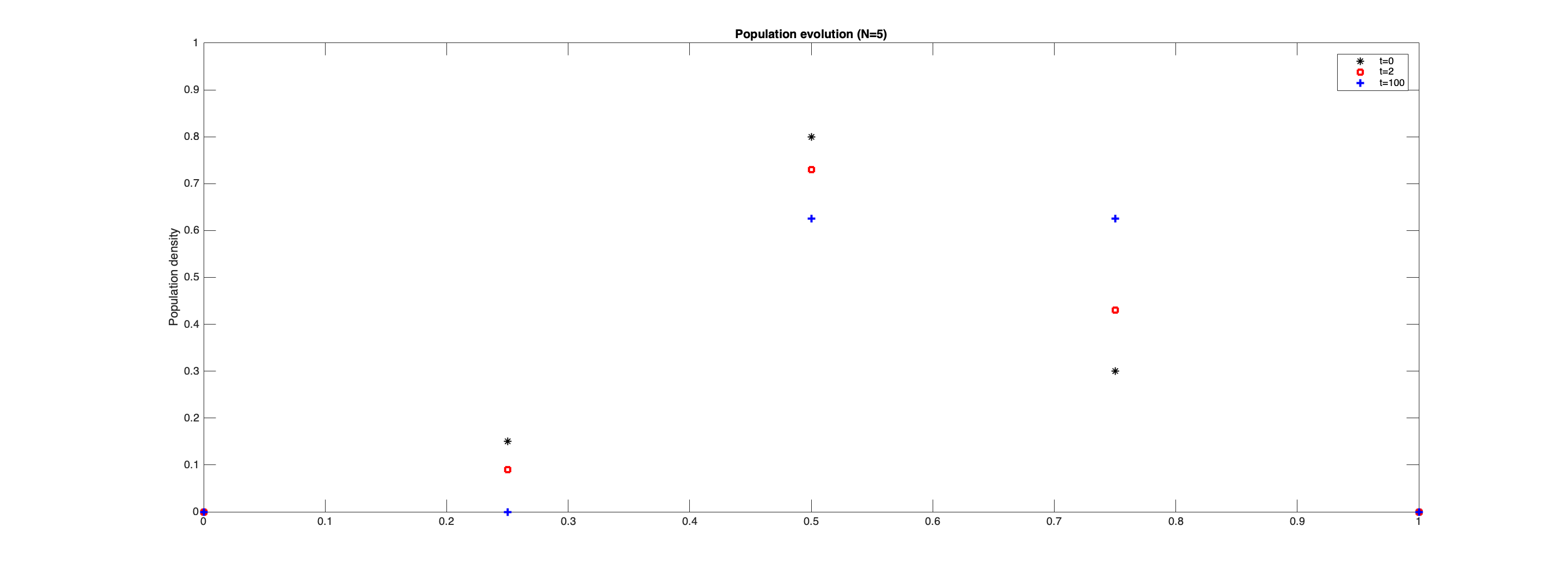}
     \caption{Initial density $u_2^0=.15, u_3^0=.80,u_4^0=.30$ }\label{Fig:Data12}
   \end{minipage}\hfill
   \begin {minipage}{1.00\textwidth}
     \centering
     \includegraphics[width=1.00\linewidth]{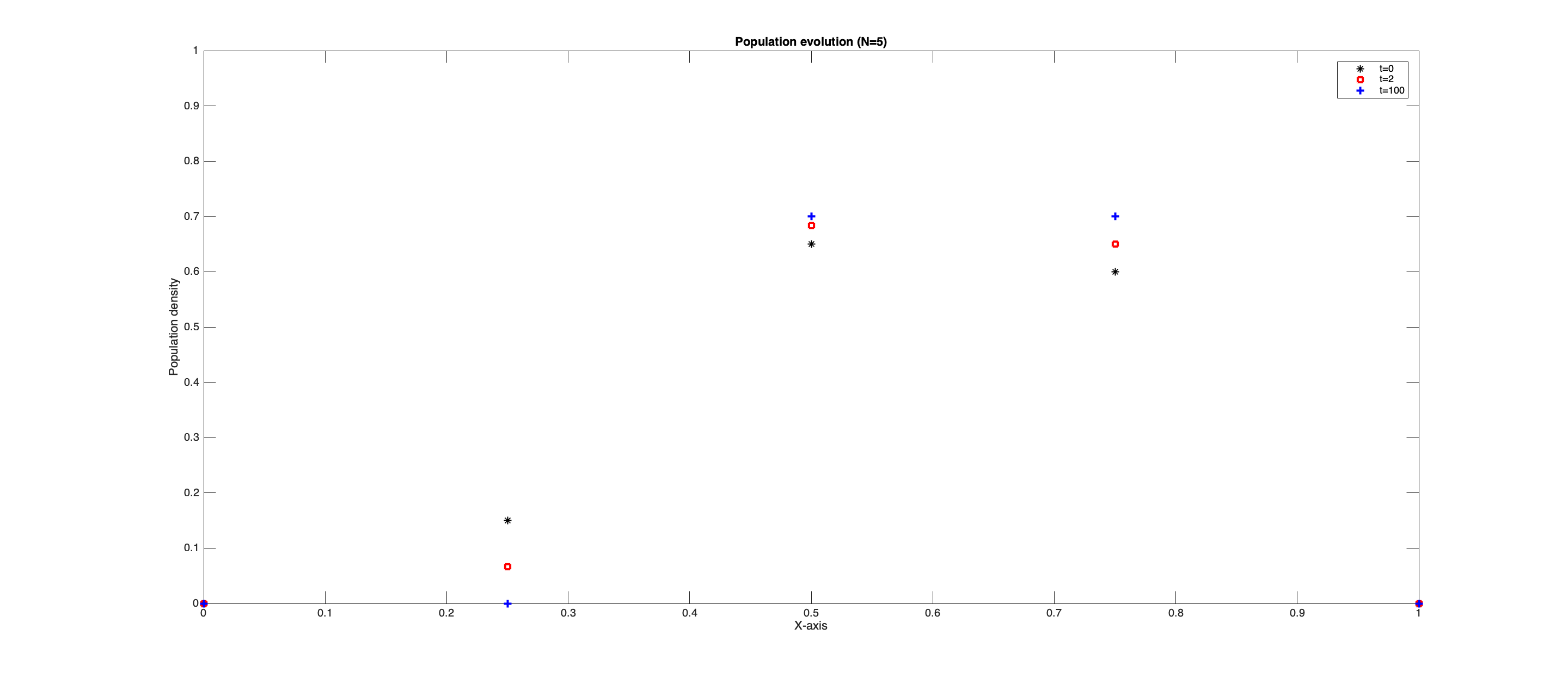}
     \caption{Initial density $u_2^0=.15, u_3^0=.65,u_4^0=.60$}\label{Fig:Data13}
   \end{minipage}
\end{figure}
\par Subcase 2:  $u(3,0) \geq  u(2,0)>1/2$. In this case we have $u(2,t)>1/2, u(3,t)>1/2$ for all $t\geq 0$ (see \cite[Theorem 3.1]{Bao2020continuum}). From equations \eqref{Lattice-1}-\eqref{Lattice-7}, we can see $u(1,t) + u(2,t)$ increase first and if $u(1,t) + u(2,t)\leq 1$ for all $t\geq 0$, then we have $u(1,t)$ is a decreasing function of time $t$ and the convergence result (see Figure \ref{Fig:Data14})
\begin{equation*}
 \lim_{t\rightarrow\infty}u_1^t=0,
\end{equation*}
\begin{equation*}
\lim_{t\rightarrow\infty}u_2^t=\lim_{t\rightarrow\infty}u_3^t = \frac{1}{2}[u(1,0) + u(2,0) +u(3,0)].
\end{equation*}
In the case when $u(1,t) + u(2,t) >1$ at some time $t^*$, then it become the case $u(1,t^*) + u(2,t^*) >1, u(2,t^*) + u(3,t^*) >1$. If $u(1,t^*) < 1/2$ and $u(3,t^*)>u(2,t^*)>1/2$, from Equation \eqref{Lattice-7} and \eqref{Lattice-4}, we have $u(1,t)\leq u(2,t) \leq u(3,t)$ for all time $t$, then $u(1,t)$ is increasing in time $t$, and we have the asymptotic result (see Figure \ref{Fig:Data15}):
\begin{equation*}
\lim_{t\rightarrow\infty}u_1^t=\lim_{t\rightarrow\infty}u_2^t=\lim_{t\rightarrow\infty}u_3^t = \frac{1}{3}[u(1,0) + u(2,0) +u(3,0)].
\end{equation*}
In the case $u(1,t^*) < 1/2$ and $1/2<u(3,t^*)<u(2,t^*)$, it goes back to the subcase 4 of case 2 and we have the convergent asymptotic results.
\begin{figure}[!htb]
   \begin{minipage}{1.00\textwidth}
     \centering
     \includegraphics[width=1.00\linewidth]{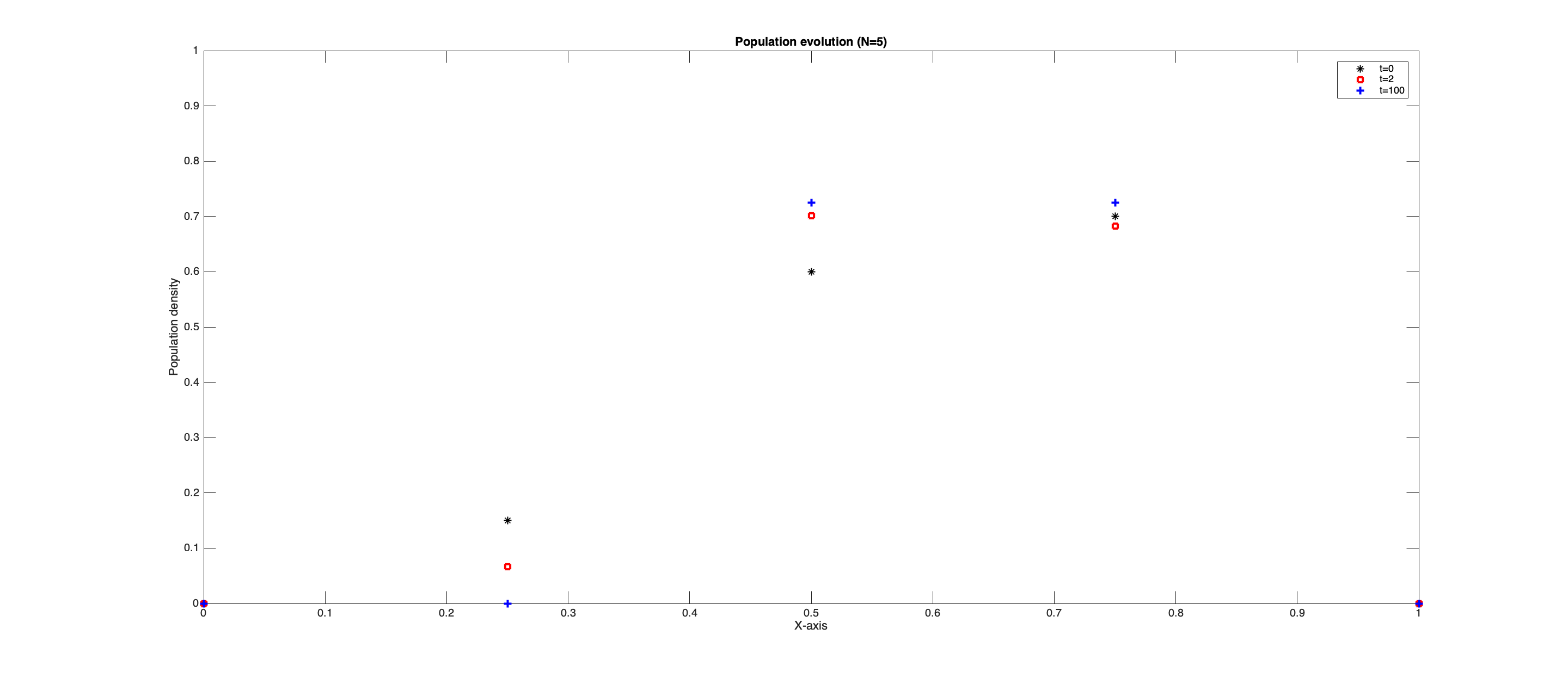}
     \caption{Initial density $u_2^0=.15, u_3^0=.60,u_4^0=.70$ }\label{Fig:Data14}
   \end{minipage}\hfill
   \begin {minipage}{1.00\textwidth}
     \centering
     \includegraphics[width=1.00\linewidth]{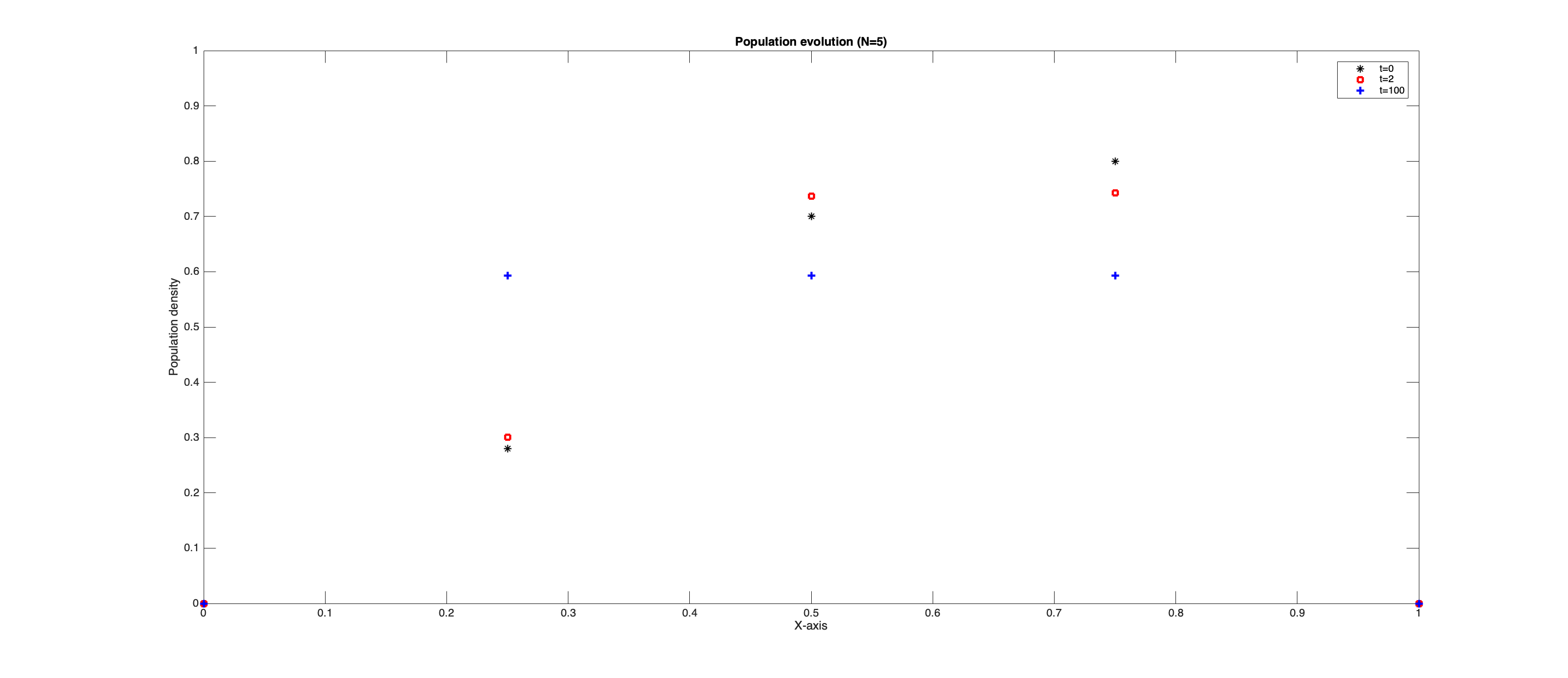}
     \caption{Initial density $u_2^0=.28, u_3^0=.70,u_4^0=.80$}\label{Fig:Data15}
   \end{minipage}
\end{figure}

\par Subcase 3: $u(3,0) >1/2>  u(2,0) >u(1,0)$. $u(2,t)$ is increasing function of time $t$ and $u(1,t)$ is a decreasing function of time $t$. If $u(2,t) <1/2$ for all time $t>0$, then from Equations \eqref{Lattice-1}-\eqref{Lattice-7}, that $u(1,t)$ is a bounded and decreasing function and it should have convergent result:
\begin{equation*}
\lim_{t\rightarrow\infty}u_1^t= 0, \lim_{t\rightarrow\infty}u_2^t=\lim_{t\rightarrow\infty}u_3^t = \frac{1}{2}[u(1,0) + u(2,0) +u(3,0)] >1/2
\end{equation*}
which is conflict to the assumption that $u(2,t) <1/2$ for all time $t$.

As a increasing function of time $t$, $u(2,t) >1/2$ after some time $t^*$, then it become the subcase 1 or 2 and we have the convergence result (see Figure \ref{Fig:Data16})
\begin{figure}[!htb]
    \begin {minipage}{0.85\textwidth}
     \centering
     \includegraphics[width=.90\linewidth]{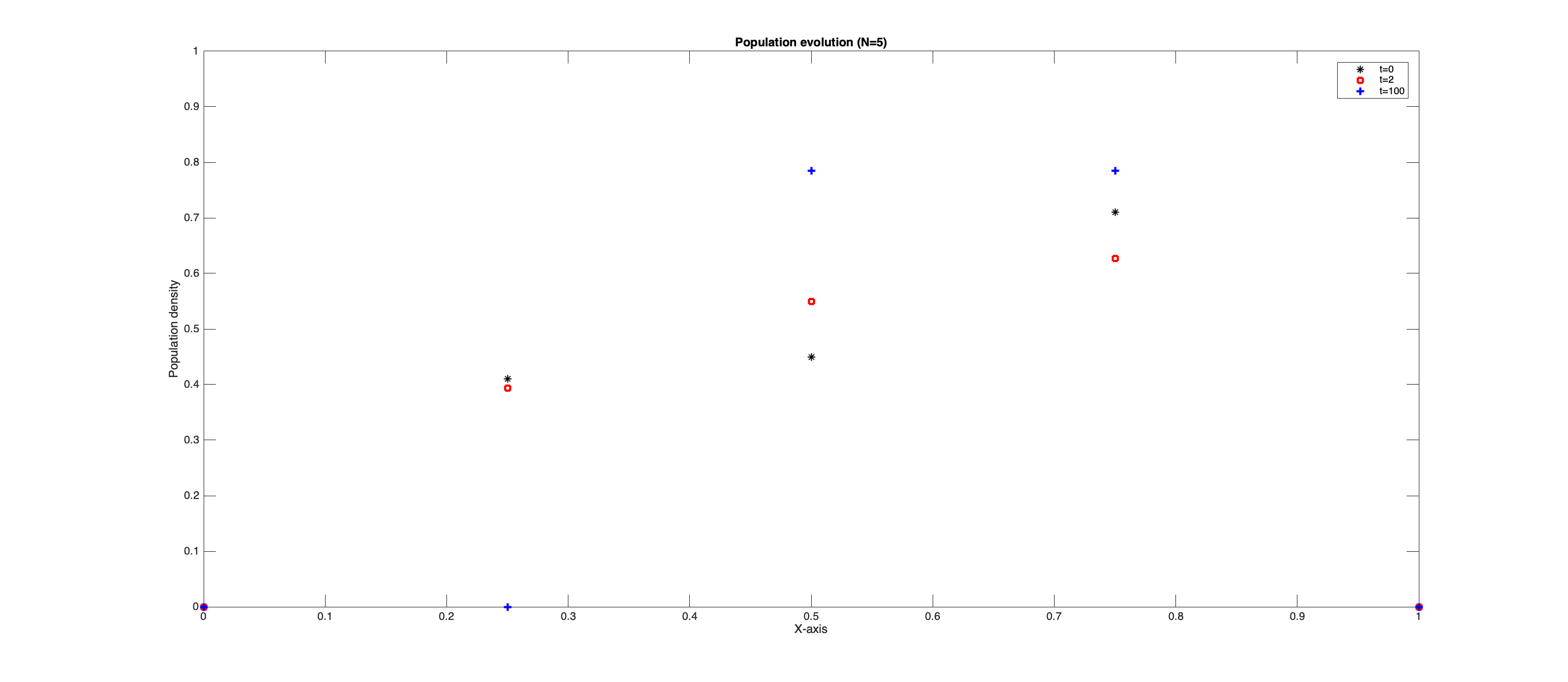}
     \caption{Initial density $u_2^0=.41, u_3^0=.45,u_4^0=.71$}\label{Fig:Data16}
   \end{minipage}
\end{figure}
\begin{rk}
When $u(1,0)+u(2,0)>1, u(2,0)+u(3,0)<1$, the asymptotic convergence results should be similar to Case 3 and we omit details here.
\end{rk}

\par {\bf Case 5}: $u(1,0)+u(2,0)=1, u(2,0)=u(3,0)$, by using equations \eqref{Lattice-1}-\eqref{Lattice-6}, we can see that this is an trivial steady state solution.

\section{Properties of the Continuous Model}
In this section, we first study existence, maximum principle and asymptotic behaviors of the backward-forward parabolic equation \eqref{Master-eq} with large initial solution. Then, we will further investigate non-existence result of Equation \eqref{Master-eq} with general initial solution which is similar to other backward-forward parabolic equation (see \cite{Smarrazzo2012degenerate} and more).

Before investigating the existence, maximum principle of backward-forward parabolic equation \eqref{Master-eq}, we first introduce the definition of weak solution of Equation \eqref{Master-eq} with condition \eqref{Diffusion-cond}.
\begin{defn}[Weak solution]\label{Def1}
 A locally continuous function $u(x,t)$ is said to be a weak solution of the backward-forward equation \eqref{Master-eq} with condition \eqref{Diffusion-cond} if
 \begin{equation}\label{weak-solu-cond1}
 \int_0^1[u^2+|D(u)|u_x^2]dx
 \end{equation}
 is uniformly bounded for $t\in [0,T]$ and for any test function $\phi(x,t)$ in $C^1_0[Q_T]$,
 \begin{equation}\label{weak-solu-cond2}
  \int_0^T\int_0^1[u\phi_t-D(u)u_x\phi_x]dxdt=0.
 \end{equation}
\end{defn}
\begin{rk}
In the special case when the diffusion coefficient $D(u)$ is a constant $C$, Equation \eqref{Master-eq} becomes the classical parabolic equation.
\end{rk}
\begin{rk}
The boundary value of $D(u)u_x$ can be zero or nonzero constant as $x$ approaches to the boundary in Equation \eqref{Master-eq} with large initial solution ($u_0 > \alpha$) on $(0,1)$ and Dirichlet boundary condition. Here we borrow the ideas of boundary degeneracy in \cite{Cannarsa2008carleman} and restrict us to the condition \eqref{weak-solu-cond1} in the weak solution of Dirichlet or Neumann boundary.
\end{rk}

When the initial solution is large enough, we have the following existence and maximum principle results.

\begin{thm}\label{Main-thm1}
 Suppose that $D(u)$ satisfies \eqref{Diffusion-cond}, $u(x,0)\in C^{1,\beta}([0,1]),\beta\in(0,1)$, and $\alpha<u(x,0)\leq 1$. Then there exists a classical solution in $C^{2,1}(Q_T)$ of the equation \eqref{Master-eq} for all $T>0$ with boundary conditions $D(u)u_x(0,t)=D(u)u_x(1,t)=0$. Furthermore, $\alpha<u(x,t)\leq 1$, for all $0\leq t\leq T$.
\end{thm}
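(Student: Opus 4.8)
Since the hypothesis $\alpha<u(x,0)\le 1$ places the initial datum strictly inside the forward (parabolic) regime of $D$, the plan is to turn \eqref{Master-eq} into a genuinely uniformly parabolic problem by a harmless cut-off of $D$ outside the expected range, solve the modified problem by classical quasilinear parabolic theory, and then use a maximum-principle / invariant-region argument to show the cut-off is never activated, so the constructed function already solves the original degenerate equation. Concretely, first set $m:=\min_{x\in[0,1]}u(x,0)$ and $M:=\max_{x\in[0,1]}u(x,0)$; by continuity on a compact interval $\alpha<m\le M\le 1$ \emph{strictly} on the lower end. Choose $\delta>0$ with $m-\delta>\alpha$ and pick $\widetilde D\in C^\infty(\RR)$ with $\widetilde D\equiv D$ on a neighborhood of $[m,M]$ (for the model diffusivity $D(u)=u^2(u-\frac12)$ one may simply take $\widetilde D\equiv D$ on $[m-\delta,+\infty)$, where $D>0$) and $\widetilde D\ge d_0>0$ everywhere. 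Then $u_t=[\widetilde D(u)u_x]_x$ is uniformly parabolic with smooth coefficients.

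\textbf{Solving the modified problem.} For the modified equation the conormal boundary condition $\widetilde D(u)u_x(0,t)=\widetilde D(u)u_x(1,t)=0$ is, because $\widetilde D>0$, simply the homogeneous Neumann condition $u_x(0,t)=u_x(1,t)=0$. With $u(\cdot,0)\in C^{1,\beta}([0,1])$ and the first-order compatibility condition $u_x(0,0)=u_x(1,0)=0$ (which is implicit in asking for a $C^{2,1}(\overline{Q_T})$ solution; alternatively one reads $C^{2,1}$ in the parabolic interior), the classical theory of quasilinear uniformly parabolic equations (e.g.\ Ladyzhenskaya--Solonnikov--Ural'tseva) yields a unique short-time solution $u\in C^{2,1}$ together with interior and up-to-the-boundary Schauder estimates.

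\textbf{Invariant region and globalization.} Every constant is an exact (sub- and super-) solution of the modified equation, so the comparison principle gives $m\le u(x,t)\le M$ on the interval of existence; in particular $\alpha<m\le u(x,t)\le M\le 1$. Consequently $\widetilde D(u)=D(u)$ along the solution, so $u$ in fact solves \eqref{Master-eq} and the original flux condition $D(u)u_x=0$ holds at $x=0,1$; and on the compact range $[m,M]\subset(\alpha,1]$ the diffusivity obeys $D(u)\ge D_0>0$ (from \eqref{Diffusion-cond}, continuity, and, at the endpoint $u=1$, either $D(1)>0$ as for $D(u)=u^2(u-\frac12)$ or the strong maximum principle, which pushes $u$ strictly below $1$ for $t>0$ unless $u\equiv 1$). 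Since this a priori bound $\alpha<u\le 1$ is independent of $T$ and the problem is uniformly parabolic on the corresponding fixed range of $u$, the standard continuation argument extends the local solution to all $t>0$, and parabolic regularity gives $u\in C^{2,1}(Q_T)$ for every $T>0$; the asserted bound $\alpha<u(x,t)\le 1$ is exactly the invariant region.

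\textbf{Main obstacle.} The decisive point is the invariant-region step: the whole reduction to classical theory is legitimate only once one knows the solution of the \emph{regularized} equation never leaves the window $(\alpha,1]$, for it is precisely this that (i) makes the equation uniformly parabolic along the solution and (ii) identifies the constructed function with a solution of the original degenerate equation. The comparison argument is clean for the modified problem; the genuinely delicate ingredients are the treatment of the endpoint $u=1$, where $D$ may degenerate in the general setting \eqref{Diffusion-cond}, and the compatibility conditions required for $C^{2,1}$ regularity up to the parabolic boundary.
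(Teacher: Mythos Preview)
Your approach is correct and shares the same overall architecture as the paper's proof (short-time existence via classical quasilinear parabolic theory, an a~priori range bound, then continuation), but the execution of the key steps differs. You regularize $D$ outside $[m,M]$ to obtain a globally uniformly parabolic problem and then invoke comparison with constant solutions to secure the invariant region $m\le u\le M$ in one stroke. The paper instead works directly with the original equation on a short interval $[0,T_1]$ and obtains the two bounds by separate devices: for the \emph{lower} bound it introduces the perturbed function $\nu=u+\epsilon t$, rules out an interior minimum of $\nu$ for $t>0$ via a first-derivative test against the equation, and lets $\epsilon\to 0$; for the \emph{upper} bound it computes
\[
\frac{d}{dt}\int_0^1|u|^p\,dx=-p(p-1)\int_0^1 D(u)|u|^{p-2}|u_x|^2\,dx\le 0
\]
and sends $p\to\infty$ to conclude $\|u(\cdot,t)\|_\infty\le\|u(\cdot,0)\|_\infty\le 1$. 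Your comparison-with-constants argument is shorter and treats the two bounds symmetrically; the paper's $L^p$ route is heavier but delivers a little more (monotone decay of all $L^p$ norms). You also flag two issues the paper passes over: the possible degeneracy at $u=1$ under the general hypothesis \eqref{Diffusion-cond}, and the compatibility condition $u_x(0,0)=u_x(1,0)=0$ needed for $C^{2,1}$ regularity up to the corners.
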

\begin{proof}
First, because of the locally continuity of the initial solution $u(x,0)$, we have $D(u) >0$ for small $T_1>0$ and the existence of the classical $C^{2,1}(Q_{T_1})$ solution is guaranteed by \cite{Lieberman1996second}, \cite{OVN}.

Next, we prove the following claim.

{\bf Claim.} For all $T_1 \geq t\geq 0$, we have the following maximum principle
\begin{equation}\label{Max-p}
 \alpha<\min_{x\in[0,1]}u(x,0) \leq u(x,t) \leq \max_{x\in[0,1]} u(x,0)\leq 1.
\end{equation}
We first prove the lower bound estimate. Suppose the lower bound estimate in \eqref{Max-p} does not exist, then we have
\begin{equation}\label{Min-value}
 \min_{x\in[0,1],0\leq t\leq T_1}u(x,t)=u(x_0,t_0),\quad T_1\geq t_0>0.
\end{equation}
Let $\nu(x,t) = u(x,t)+\epsilon t$ with $\epsilon>0$, then
\begin{eqnarray}\label{Min-eq}
 \nu_t=u_t+\epsilon &=&[D(u)u_x]_x+\epsilon
 =[D(u)\nu_x]_x+\epsilon,\nonumber
 \\
 &=& D'(u)\nu_x^2+D(u)\nu_{xx}+\epsilon,
\end{eqnarray}
$\nu(x,t)$ can not have a minimum for $t>0$. Otherwise $\nu(x_1,t_0), x_1\in[0,1] $ is the mimimum, then $\nu_x(x_1,t_0)=0,\nu_{xx}(x_1,t_0) \geq 0,$  which is a contradiction to Equation \eqref{Min-eq}.
So we obtain
\begin{equation*}
\min_{x\in[0,1],0\leq t\leq T_1}\nu(x,t)=\min_{x\in[0,1]}\nu(x,0)=\min_{x\in[0,1]}u(x,0),
\end{equation*}
 which means
\begin{equation}\label{eq:012}
 u(x,t)+\epsilon t\geq \min_{x\in[0,1]}u(x,0),\quad \forall \epsilon>0.
\end{equation}
For the arbitrariness of $\epsilon$, and let $\epsilon\rightarrow 0,$ we obtain $u(x,t)\geq\min_{x\in[0,1]}u(x,0)$,
which leads to  $u(x,t)\geq \alpha+\delta_1$ for some small number $\delta_1$ and $t>0$.

Furthermore, we see this classical solution of Equation \eqref{Master-eq} satisfies the following $L^p-$estimate:
\begin{eqnarray*}
\frac{d}{dt}\int_0^1|u|^pdx &=& p\int_0^1 u_tu|u|^{p-2}dx = p\int_0^1 u|u|^{p-2}(D(u)u_x)_xdx
\\
&=&-p(p-1)\int_0^1D(u)|u|^{p-2}|u_x|^2 dx\leq -cp(p-1)\int_0^1|u|^{p-2}|u_x|^2dx \leq 0.
\end{eqnarray*}
It follows from these inequalities that the following $L^p-$ norm can be bounded by the initial data:
\begin{equation}\label{Up-bound}
 (\int_0^1|u(x,t)|^p)^{\frac{1}{p}} \leq (\int_0^1|u(x,0)|^p)^{\frac{1}{p}}, \quad \forall p\in [2,\infty).
\end{equation}
Thus, as $p\to\infty$, we see that $\|u(\cdot,t)\|_{L^\infty([0,1])} \leq |u(\cdot,0)\|_{L^\infty([0,1])} \leq 1$ $\forall T_1 \geq t>0$. Because the lower and upper bound in \eqref{Max-p} are independent of $T_1$, we can iterate the above process to guarantee inequalities in \eqref{Max-p} for all $0<T <\infty$ and obtain the results in the statement.
\end{proof}
\begin{rk}
Under the assumption $u(x,0)\in C^{1,\beta}([0,1]),\beta\in(0,1), \alpha<u(x,0)\leq 1$ the solution of \eqref{Master-eq} actually belong to $C^\infty(Q_T)$ \cite{Bao2020continuum}, and this will play as a key method in proving non-existence result of Equation \eqref{Master-eq} with a general initial solution.
\end{rk}

Equation \eqref{Master-eq} with uniform parabolic diffusion coefficient $D(u)\geq c>0$ and its corresponding discrete equation
have the vanishing asymptotic results with Dirichlet boundary, but the aggregation diffusion lattice equation \eqref{lattice-eq} has a rich asymptotic dynamical behaviors (see section 3).

For the comparison reason, we only focus on 1-dimensional case:
\begin{equation}\label{uniform-parab}
\begin{cases}
\frac{\partial u}{\partial t} = \frac{\partial}{\partial x}(D(u)\frac{\partial u}{\partial x}), \quad (x,t)\in [0,1]\times [0,T],
\\
u(0,t)=u(1,t)=0.
\end{cases}
\end{equation}
The asymptotic behavior extension of Equation \eqref{uniform-parab} to the high dimension will be the same as in 1-dimensional case.

\begin{thm}\label{Main-thm2}
Suppose $u_0\in C^{1,\beta}[0,1]$ and  $1\geq u_0\geq 0$, then the solution $u(x,t)$ of System \eqref{uniform-parab} with uniform parabolic diffusion coefficient $D(u)\geq c>0$ goes to zero as $t\to\infty$.
\end{thm}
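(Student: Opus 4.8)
The plan is to establish decay to zero by combining an energy/Lyapunov estimate with the maximum principle already available for the uniformly parabolic problem. First I would verify that System \eqref{uniform-parab} has a global classical solution with $0\le u(x,t)\le 1$ for all $t\ge 0$: since $D(u)\ge c>0$ the equation is uniformly parabolic, short-time existence and regularity follow from standard quasilinear theory (as cited in the proof of Theorem \ref{Main-thm1}), the constants $0$ and $1$ are sub/supersolutions, and the comparison principle plus the bound keeps the solution in $[0,1]$, so the local solution continues globally.

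Next I would derive the basic $L^2$ energy identity: multiplying the equation by $u$ and integrating by parts (using $u(0,t)=u(1,t)=0$) gives
\begin{equation*}
\frac{1}{2}\frac{d}{dt}\int_0^1 u^2\,dx = -\int_0^1 D(u)\,u_x^2\,dx \le -c\int_0^1 u_x^2\,dx.
\end{equation*}
Then I would invoke the Poincar\'e inequality on $(0,1)$ with zero Dirichlet data, $\int_0^1 u^2\,dx \le C_P\int_0^1 u_x^2\,dx$, to obtain
\begin{equation*}
\frac{d}{dt}\int_0^1 u^2\,dx \le -\frac{2c}{C_P}\int_0^1 u^2\,dx,
\end{equation*}
so Gr\"onwall's inequality yields $\int_0^1 u(x,t)^2\,dx \le e^{-2ct/C_P}\int_0^1 u_0^2\,dx \to 0$ as $t\to\infty$. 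This already gives exponential decay in $L^2$.

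Finally, to upgrade $L^2$ decay to the pointwise (uniform) statement, I would use parabolic smoothing: on a unit time window $[t,t+1]$ the solution is uniformly bounded in $[0,1]$, so interior-and-boundary parabolic estimates (Schauder or $L^p$ followed by Sobolev embedding, using that the Dirichlet data are zero and $D$ is smooth) bound $\|u(\cdot,t+1)\|_{L^\infty}$ in terms of $\|u\|_{L^2([t,t+1]\times[0,1])}$, which tends to $0$. Alternatively one can run the $L^p$ estimate already displayed in the proof of Theorem \ref{Main-thm1} to control $\|u(\cdot,t)\|_{L^\infty}$ by $\|u(\cdot,t_0)\|_{L^\infty}$ on each later interval while the $L^2$ norm forces the relevant mass to vanish, or simply note that the discrete argument given earlier in Section 3 for \eqref{lattice-heat} has a continuous analogue via the strong maximum principle. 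The main obstacle I anticipate is the regularity bookkeeping in this last step — making the smoothing estimate rigorous up to the boundary for the quasilinear operator $[D(u)u_x]_x$ — but since $D$ is smooth and bounded below this is routine; the heart of the argument is the Poincar\'e–Gr\"onwall decay of the energy.
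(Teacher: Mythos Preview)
Your proposal is correct and follows essentially the same route as the paper: multiply by $u$, integrate by parts using the Dirichlet data, use $D(u)\ge c$ together with the Poincar\'e inequality and Gr\"onwall to obtain exponential decay of $\int_0^1 u^2\,dx$. The paper in fact stops at this $L^2$ decay and reads the theorem in that sense, so your final smoothing step upgrading to uniform convergence is extra care beyond what the paper supplies.
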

\begin{proof}
By the classical parabolic equation existence theory \cite{OVN}, System \eqref{uniform-parab} has a classical solution with $u_0\in C^{1,\beta}[0,1]$.

Now we consider the time derivative of the total population density
\begin{eqnarray}
 \frac{1}{2}\frac{d}{dt}\int_0^1 u^2dx &=& \int_0^1 uu_tdx =\int_0^1 u(D(u)u_x)_x dx\nonumber
  \\
  &=& \int_0^1 -D(u)u_x^2dx \leq -c\int_0^2 u_x^2dx\nonumber
  \\
  &\leq & -c\delta \int_0^1 u^2dx \quad (\mbox{Poincar$\acute{e}$ inequality})
\end{eqnarray}
Then by Gronwall's inequality, we have
\begin{equation}
\int_0^1 u^2(x,t) dx \leq (\int_0^1 u(x,0)^2dx) e^{-2c\delta t}.
\end{equation}
Let $t\to +\infty$, we have
\begin{equation}
\lim_{t\to +\infty}\int_0^1 u^2(x,t)dx = 0,
\end{equation}
which means the total population density will vanish as time continuous and we have a similar result in its corresponding discrete system in section 3 for classical discrete heat equation. The biological explanation is that the species will extinct when it has pure diffusion migration.
\end{proof}

When the initial solution is large enough, Equation \eqref{Master-eq} has the following asymptotic behavior with Neumann boundary \cite{Bao2020continuum}.
\begin{thm}[Theorem 3.2 \cite{Bao2020continuum}]\label{thm2}
 Suppose $u_0\in C^{1,\beta}[0,1], \alpha < u_0 \leq 1$, and $u(x,t)$ is the solution of the equation \eqref{continu-eq} with Neumann boundary condition, then solution $u(x,t)$ will go to the constant $C=\int_0^1u(x,0)dx$.
\end{thm}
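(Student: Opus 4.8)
The plan is to exploit Theorem \ref{Main-thm1}: under the hypothesis $\alpha<u_0\le 1$ we already know that the solution is classical (indeed $C^\infty(Q_T)$) and obeys the maximum principle $\alpha<u(x,t)\le 1$ for all $t\ge 0$. Consequently $D(u)=u^2(u-\tfrac12)$ is bounded below by the positive constant $c:=(\min_{[0,1]}u_0)^2(\min_{[0,1]}u_0-\tfrac12)>0$, so \eqref{continu-eq} is a genuinely uniformly parabolic equation on all of $Q_T$ with smooth coefficients. The first step is to record conservation of mass: integrating \eqref{continu-eq} over $(0,1)$ and using the Neumann condition $D(u)u_x=0$ at $x=0,1$ gives $\frac{d}{dt}\int_0^1 u\,dx=0$, hence $\int_0^1 u(x,t)\,dx=\int_0^1 u_0\,dx=C$ for every $t\ge 0$.

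Next I would run an $L^2$ energy estimate against the mean. Multiplying \eqref{continu-eq} by $u$ and integrating by parts (the boundary terms vanish by the Neumann condition),
\begin{equation*}
\frac{1}{2}\frac{d}{dt}\int_0^1 u^2\,dx=-\int_0^1 D(u)u_x^2\,dx\le -c\int_0^1 u_x^2\,dx.
\end{equation*}
Since $\int_0^1(u-C)^2\,dx=\int_0^1 u^2\,dx-C^2$ (using $\int_0^1 u\,dx=C$), the left-hand side equals $\tfrac12\tfrac{d}{dt}\int_0^1(u-C)^2\,dx$. Because $C$ is exactly the spatial mean of $u(\cdot,t)$, the Poincar\'e--Wirtinger inequality $\int_0^1(u-C)^2\,dx\le C_P\int_0^1 u_x^2\,dx$ applies, and combining yields
\begin{equation*}
\frac{d}{dt}\int_0^1(u-C)^2\,dx\le -\frac{2c}{C_P}\int_0^1(u-C)^2\,dx.
\end{equation*}
Gronwall's inequality then gives exponential decay $\int_0^1(u(x,t)-C)^2\,dx\le e^{-2ct/C_P}\int_0^1(u_0-C)^2\,dx\to 0$ as $t\to\infty$.

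Finally, to pass from $L^2$ convergence to convergence in the stronger (pointwise/uniform) sense stated, I would invoke parabolic regularity. The bound $\alpha<u\le 1$ keeps the equation uniformly parabolic with bounded smooth coefficients on every time slab, so interior-in-time Schauder estimates (or De Giorgi--Nash--Moser) bound $\|u(\cdot,t)\|_{C^{1,\theta}[0,1]}$ uniformly for $t\ge 1$; hence $\{u(\cdot,t)\}_{t\ge 1}$ is precompact in $C[0,1]$, and every subsequential limit must be $C$ by the $L^2$ decay, so $u(\cdot,t)\to C$ uniformly. The main obstacle is precisely this last step: one must verify that the regularity constants do not degenerate as $t\to\infty$, which is exactly where the uniform parabolicity furnished by Theorem \ref{Main-thm1} is indispensable; everything preceding it is a routine energy argument.
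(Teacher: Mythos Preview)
Your argument is correct and is essentially the approach the paper has in mind: the paper does not reprove this theorem (it quotes it from \cite{Bao2020continuum}), but the surrounding material makes the intended method clear. The proof of Theorem~\ref{Main-thm2} carries out exactly your energy computation $\tfrac12\tfrac{d}{dt}\int u^2=-\int D(u)u_x^2\le -c\int u_x^2$, then Poincar\'e and Gronwall, for the Dirichlet problem; the remark immediately following Theorem~\ref{thm2} says the same arguments yield the Neumann result. Your proof is precisely that transplant: conservation of mass from the zero-flux condition, the same energy identity, and the Poincar\'e--Wirtinger inequality (mean-zero version) in place of the Dirichlet Poincar\'e inequality, followed by Gronwall.

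The one thing you add beyond the paper's sketch is the final upgrade from $L^2$ decay to uniform convergence via compactness from parabolic Schauder/De~Giorgi--Nash estimates; the paper's Dirichlet analogue stops at the $L^2$ statement. Your observation that the uniform lower bound $u\ge\min u_0>\alpha$ from Theorem~\ref{Main-thm1} keeps the equation uniformly parabolic for all time is exactly what makes those estimates time-uniform, so this step is sound.
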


\begin{rk}
In the uniformly parabolic case where $D(u)\geq c>0$ and $u_0\geq 0, u_0\in C^{1,\beta}[0,1]$, by using the same arguments in the proof of \ref{thm2}, the solution has the same asymptotic convergence result.
\end{rk}

Because of the aggregation or reverse diffusion, by using Theorem \ref{Main-thm1}, we have the following non-existence result of system \eqref{continu-eq} with Dirichlet boundary.
\begin{thm}\label{Main-thm3}
Suppose $u(x,0)\in C^{1,\beta}[0,1], 0<\beta<1$, then Equation \eqref{Master-eq} with diffusion condition \eqref{Diffusion-cond} has no weak solution with Dirichlet boundary condition.
\end{thm}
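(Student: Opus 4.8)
The plan is to argue by contradiction, the essential point being that the Dirichlet condition pins $u$ to the value $0$, which sits strictly inside the backward (aggregation) interval $(0,\alpha)$ of \eqref{Diffusion-cond}. So suppose that for some $T>0$ there is a weak solution $u$ of \eqref{Master-eq}, \eqref{Diffusion-cond} in the sense of Definition \ref{Def1} with $u(0,t)=u(1,t)=0$ and $u(\cdot,0)=u_0\in C^{1,\beta}[0,1]$ (and $u_0\not\equiv 0$, to exclude the trivial steady state). Since $u$ is locally continuous and $u(0,\cdot)\equiv 0$, there is a strip $S_\delta=(0,\delta)\times(0,T)$ on which $0\le u<\alpha$, hence $D(u)\le 0$ there and $D(u)<0$ wherever $u>0$; thus on $S_\delta$ equation \eqref{Master-eq} is of backward-parabolic type, and near $x=0$ it is moreover degenerate, $D(u)\sim-\frac12 u^2$, so it behaves like the ill-posed backward very-fast-diffusion equation $u_t=-\frac16(u^3)_{xx}$.

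Next I would extract the contradiction from the behavior of $u$ on $S_\delta$. The finite-energy requirement $\int_0^1|D(u)|u_x^2\,dx<\infty$ in Definition \ref{Def1} forces $u^2\in H^1(0,\delta)$ for a.e.\ $t$, so $u(x,t)=O(x^{1/4})$ as $x\to 0^+$; in particular the flux primitive $\Phi(u):=\int_0^uD(s)\,ds=\frac{u^4}{4}-\frac{u^3}{6}$ satisfies $\Phi(u(x,t))=o(x)$, which lets the weak formulation tested against sharp spatial cut-offs show that $\int_0^\delta u(\cdot,t)\,dx$ changes only through a boundary flux that vanishes in the limit --- the continuum counterpart of the conservation in Theorem \ref{Main-thm6}. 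Now pass to reverse time on $S_\delta$: $w(x,s):=u(x,T-s)$ solves the degenerate \emph{forward}-parabolic equation $w_s=(|D(w)|w_x)_x$, so the interior regularity invoked in the Remark after Theorem \ref{Main-thm1} gives $w,u\in C^\infty$ on $(0,\delta)\times(0,T)$, and $T$ being arbitrary $u\in C^\infty((0,\delta)\times(0,\infty))$. Reading the same equation \emph{forward} in time, the aggregation it encodes drives the conserved mass on $(0,\delta)$ to concentrate instantaneously toward the boundary of the set where $u$ stays away from $0$, which is incompatible with $u$ remaining locally continuous; equivalently, the restriction of $u_0$ near $x=0$ would have to be a terminal value of a smoothing semigroup, hence to lie in a proper subset of $C^{1,\beta}$, so for a general datum no such $u$ exists. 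This is exactly where the $C^\infty$-regularity remark following Theorem \ref{Main-thm1} does the work; repeating the argument near $x=1$ completes the contradiction, and hence \eqref{Master-eq} with \eqref{Diffusion-cond} has no weak solution under the Dirichlet condition.

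The main obstacle is making the backward-regime non-existence rigorous in the neighborhood of the degeneracy point $u=0$ that the Dirichlet condition unavoidably produces: one has to (i) justify the reverse-time regularization and the mass-balance computation up to the degenerate set $\{u=0\}$, where the classical parabolic machinery used for Theorem \ref{Main-thm1} applies only after the change of variables and needs the boundary decay $u=O(x^{1/4})$ to be propagated in $t$, and (ii) turn ``instantaneous concentration'' into a genuine contradiction, e.g.\ by showing that once $\int_0^\delta u(\cdot,t)\,dx$ is conserved and $u$ is pushed toward a profile with a jump, the quantity $\int_0^\delta|D(u)|u_x^2\,dx$ cannot stay bounded, violating Definition \ref{Def1}. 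A simpler route I would try first is to bypass the dynamical part and contradict Definition \ref{Def1} directly via the $L^2$ identity $\frac{d}{dt}\int_0^1u^2\,dx=-2\int_0^1D(u)u_x^2\,dx$ combined with the failure of the maximum principle on the backward strip --- the $\nu=u+\epsilon t$ argument of Theorem \ref{Main-thm1} breaks down precisely where $D(u)<0$ --- which is the source of the uncontrolled growth of the energy in Definition \ref{Def1}.
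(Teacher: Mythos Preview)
Your proposal does contain the paper's argument, but buried among several alternative routes. The sentence ``equivalently, the restriction of $u_0$ near $x=0$ would have to be a terminal value of a smoothing semigroup, hence to lie in a proper subset of $C^{1,\beta}$'' is precisely the paper's proof, and you correctly flag that the regularity remark after Theorem~\ref{Main-thm1} is what drives it. The paper's version is three lines: reverse time via $\tau=T-t$ to get $u_\tau=-(D(u)u_x)_x$; near $x=0$ local continuity and the Dirichlet condition force $u<\alpha$ on a strip $[0,\epsilon_1]\times[0,T_{\epsilon_1}]$, so $-D(u)>0$ there and the reversed equation is forward parabolic; then Theorem~\ref{Main-thm1} (and its remark) gives $u\in C^{2,1}$ on that strip, in particular $u(\cdot,0)\in C^{2,1}$ near $x=0$, which is strictly more regular than the assumed $C^{1,\beta}$---contradiction.

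All of your mass-balance, flux-decay $u=O(x^{1/4})$, instantaneous-concentration, and $L^2$-energy material is extraneous to the paper's route and can be dropped: the contradiction is a pure regularity bootstrap, not a dynamical one. Your concern about the degeneracy at $u=0$ is a legitimate technical point, but the paper does not engage with it either---it simply invokes Theorem~\ref{Main-thm1} (whose hypotheses, strictly speaking, concern the range $\alpha<u\le 1$ rather than $0\le u<\alpha$) and the classical parabolic theory behind it. Likewise, the contradiction as written---in both your sketch and the paper---only rules out data $u_0\in C^{1,\beta}\setminus C^{2,1}$; neither version addresses what happens when $u_0$ is itself smooth.
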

\par
\begin{proof} Suppose Equation \eqref{Master-eq} with diffusion condition \eqref{Diffusion-cond} has a weak solution, by the linear transform $\tau=T-t$, we reverse the time interval and obtain
\begin{equation}\label{Rev-eq}
 u_\tau=-(D(u)u_x)_x,\quad \forall (x,t)\in Q_T.
\end{equation}
Because $u(0,t)=u(1,t)=0$ and by the local continuity, assuming $T$ is small enough, Equation \eqref{Rev-eq} has a local solution on the domain $[0,\epsilon_1]\times [0,T_{\epsilon_1}]$. Then by Theorem \ref{Main-thm1}, $u(x,t) \in C^{2,1}((0,\epsilon_1]\times [0,T_{\epsilon_1}))$, especially, $u(x,0)\in C^{2,1}(0,1]$ which is more regular than $C^{1,\beta}[0,1]$, and we have the contradiction.
\end{proof}


\section{Conclusion}
In this paper, we established a backward-forward parabolic equation from an individual-based model to describe the aggregation and diffusion in cell evolution. When the initial density is large enough, the existence, maximum principle and the asymptotic behaviors of the continuous Equation \eqref{continu-eq} are investigated with Neumann boundary. Also the non-existence of solution of Equation \eqref{continu-eq} is established with Dirichlet boundary condition.

In the corresponding discrete model with Dirichlet boundary condition, species eventually vanish with only diffusion mechanism. However, with the help of aggregation when cell density is small, density conservation is guaranteed which validate our observation in biology that aggregation help the survive of species under dangerous environment. The species converges to the steady-state when the initial solution is in the diffusion domain or large enough. However, different patterns emerges with general initial solution in the special 5 points lattice model with Dirichlet boundary which give us a possible explanation that patterns in biology are created by the interaction between aggregation and diffusion.

\end{document}